\DeclareRobustCommand\widecheck[1]{{\mathpalette\@widecheck{#1}}}
\def\@widecheck#1#2{%
    \setbox\z@\hbox{\m@th$#1#2$}%
    \setbox\tw@\hbox{\m@th$#1%
       \widehat{%
          \vrule\@width\z@\@height\ht\z@
          \vrule\@height\z@\@width\wd\z@}$}%
    \dp\tw@-\ht\z@
    \@tempdima\ht\z@ \advance\@tempdima2\ht\tw@ \divide\@tempdima\thr@@
    \setbox\tw@\hbox{%
       \raise\@tempdima\hbox{\scalebox{1}[-1]{\lower\@tempdima\box
\tw@}}}%
    {\ooalign{\box\tw@ \cr \box\z@}}}
\theoremstyle{plain} 
\newtheorem{lemma}[equation]{Lemma}
\newtheorem{theorem}[equation]{Theorem}
\newtheorem{corollary}[equation]{Corollary}
\theoremstyle{definition}
\newtheorem{definition}[equation]{Definition}
\theoremstyle{remark}
\newtheorem{remark}[equation]{Remark}
\numberwithin{equation}{section}
\title[bilinear maximal operator on filtered measure spaces]
  {Weighted estimates for the bilinear maximal operator on filtered measure spaces}
\subjclass[2010]{Primary 60G46; Secondary 60G42}
\keywords{Filtered measure space, Bilinear
maximal operator, Weighted inequality, Reverse H\"{o}lder's condition, Hyt\"{o}nen-P\'{e}rez type estimate.}
\author[W. Chen]{Wei Chen}
\address{School of Mathematical Sciences, Yangzhou University, Yangzhou 225002, China}
\email {weichen@yzu.edu.cn}
\author[Y. Jiao]{Yong Jiao}
\address{School of Mathematics and Statistics, Central South University, 410075 Changsha, China}
\email{jiaoyong@csu.edu.cn}
\thanks{The research of Wei Chen is supported by NSFC (11971419, 11771379) and the School Foundation of Yangzhou University (2019CXJ001). The research of Yong Jiao is supported by NSFC (11722114,11961131003).}
\begin{document}
\begin{abstract} Assuming the bilinear reverse H\"{o}lder's condition, we character weighted inequalities for the bilinear maximal
operator on filtered measure spaces. We also obtain Hyt\"{o}nen-P\'{e}rez type weighted estimates for the bilinear maximal
operator. Our approaches are mainly based on the new construction of bilinear versions of principal sets and the new Carleson embedding theorem on filtered measure spaces.
In particular, we find a new property of the construction and we call it the conditional sparsity of principal sets.
\end{abstract}

	\maketitle
\tableofcontents

\section{Introduction}\label{Tntro}
Let $\mathbb R^n$ be the $n\hbox{-dimensional}$ real Euclidean
space and $f$ a real valued measurable function, the classical Hardy-littlewood maximal operator is defined by
$$Mf(x)=\sup\limits_{x\in Q}\frac{1}{|Q|}\int_Q|f(y)|dy,$$
 where $Q$ is a non-degenerate cube with its sides parallel to the coordinate
axes and $|Q|$ is the Lebesgue measure of $Q.$

Let $u,~v$ be two weights, i.e. positive measurable functions. As is
well known, for $p\geq1,$ Muckenhoupt \cite{Muckenhoupt B} showed that the
inequality
$$ \lambda^p\int_{\{Mf>\lambda\}}u(x)dx
\leq C \int_{\mathbb R^n}|f(x)|^pv(x)dx, ~~\lambda>0,~f\in{L^p(v)}
$$
holds if and only if $(u,v)\in A_p,$ i.e., for any cube $Q$ in
$\mathbb R^n$ with sides parallel to the coordinates
$$
\Big(\frac{1}{|Q|}\int_Qu(x)dx\Big)
\Big(\frac{1}{|Q|}\int_Qv(x)^{-\frac{1}{p-1}}dx\Big)^{p-1}<C,~p>1;
$$
$$
\frac{1}{|Q|}\int_Qu(x)dx\leq C \mathop{\hbox{ess inf}}\limits_{Q}v(x)
,~p=1.
$$
Suppose that $u=v$ and $p>1,$ Muckenhoupt \cite{Muckenhoupt B} also proved that
$$ \int_{\mathbb R^n}\big(Mf(x)\big)^pv(x)dx
\leq C \int_{\mathbb R^n}|f(x)|^pv(x)dx, ~\forall f\in{L^p(v)}
$$
holds if and only if $v$ satisfies
\begin{equation}\label{Ap}
\Big(\frac{1}{|Q|}\int_Qv(x)dx\Big)
\Big(\frac{1}{|Q|}\int_Qv(x)^{-\frac{1}{p-1}}dx\Big)^{p-1}<C,~\forall Q\subset \mathbb R^n.
\end{equation}
The crucial step is to show that if $v$ satisfies $A_p$,
then there is an $\varepsilon> 0$ such that $v$
also satisfies $A_{p-\varepsilon}.$
However, the problem of finding all $u$ and $v$ such that
$$ \int_{\mathbb R^n}\big(Mf(x)\big)^pu(x)dx
\leq C \int_{\mathbb R^n}|f(x)|^pv(x)dx,~\forall f\in{L^p(v)}
$$
is much hard and complicated. In order to solve the problem,
Sawyer \cite{Sawyer E T.} established the
testing condition $S_{p,q},$ i.e. for any cube $Q$ in
$\mathbb{R}^n$ with sides parallel to the coordinates
$$\Big(\int_{Q}\big(M(\chi_Qv^{1-p'})(x)\big)^qu(x)dx\Big)^{\frac{1}{q}}
\leq C\Big(\int_Qv(x)^{1-p'}dx\Big)^{\frac{1}{p}},~\forall Q\subset \mathbb R^n,$$
where $1<p\leq q<\infty$ and $\chi_Q$ is the characteristic function of $Q.$
The condition $S_{p,q}$ is a
sufficient and necessary condition such that
the weighted inequality
$$\Big(\int_{\mathbb R^n}\big(Mf(x)\big)^qu(x)dx\Big)^{\frac{1}{q}} \leq C
\Big(\int_{\mathbb R^n}|f(x)|^pv(x)dx\Big)^{\frac{1}{p}}, ~\forall f\in{L^p(v)}$$
holds.
In this case, the method of proof is very interesting.
Motivated by \cite{Muckenhoupt B, Sawyer E T.},
the theory of weights developed so
rapidly that it is difficult to give its history a full
account here (see \cite{Garcia Rubio}
and \cite{Cruz-Uribe D. J. M. Martell} for more information).

Weighted estimates for the maximal operator $\prod_{j=1}^m Mf_j$ ($m$-fold product of $M$) in the multilinear setting were studied in \cite{GT} and \cite{PT}. Recently, the new multilinear maximal function
\begin{equation}\label{multi_maximal_operator}\mathcal{M}(f_1,...,f_m)(x) := \sup\limits_{x\in Q}
\prod\limits_{i=1}\limits^{m}\frac{1}{|Q|}\int_Q|f_i(y_i)|dy_i, \quad x\in \mathbb R^n
\end{equation}
associated with cubes with sides parallel to the coordinate
axes was first defined and the corresponding weight theory was studied in \cite{Lerner A.K. Ombrosi S.}.
The importance of this operator is that it is strictly smaller than the $m$-fold product of $M$. Moreover, it generalizes the Hardy--Littlewood
maximal function (case $m=1$) and in several ways it controls the class
of multilinear Calder\'{o}n--Zygmund operators as shown in \cite{Lerner A.K. Ombrosi S.}.
The relevant class of multiple weights for $\mathcal{M}$ is given by the condition $A_{\overrightarrow{p}}$ \cite[Definition 3.5]{Lerner A.K. Ombrosi S.}: for
$\overrightarrow{p}=(p_1,p_2,\cdot\cdot\cdot,~ p_m),$
$\overrightarrow{\omega}=(\omega_1, ~\omega_2,\cdot\cdot\cdot,~\omega_m)$ and a weight $v,$
the weight vector $(v, \overrightarrow{\omega})\in A_{\overrightarrow{p}}$ if
$$\sup_Q\Big(\frac{1}{|Q|}\int_Qv(x)dx\Big)\prod\limits^m_{i=1}\Big(\frac{1}{|Q|}\int_Q\omega_i(x)^{-\frac{1}{p_i-1}}dx\Big)^{\frac{p}{p'_i}}
< \infty,$$
where $\frac{1}{p}=\sum\limits^m_{i=1}\frac{1}{p_i }$ and $1\leq p_1,p_2,...,p_m<\infty.$
The more general case was extensively discussed in
\cite{Grafakos L. Liu L. G. Yang D. C., Grafakos L. Liu L. G. Perez C. Torres R. H.}.
Using a dyadic discretization technique, Dami\'{a}n, Lerner and P\'{e}rez \cite{W. Damian}
and Li, Moen and Sun \cite{Li Moen Sun} proved some sharp weighted norm inequalities for the multilinear maximal operator $\mathcal M.$
In order to establish the generalization of Sawyer's theorem to the multilinear setting,
a kind of monotone property and a
reverse H\"{o}lder's condition on the weights were introduced in \cite{W. M. Li} and \cite{Chen-Damian}, respectively. Note that if $v=\prod\limits_{i=1}^m\omega_i^{\frac{p}{p_i}},$ then the condition
$(v,\overrightarrow{\omega})\in A_{\overrightarrow{p}}$ implies the reverse H\"{o}lder's condition $\overrightarrow{\omega}\in RH_{\overrightarrow{p}}$ \cite[Proposition 2.3]{cao-xue}.
In addition, Chen and Dami\'{a}n investigated a bound $B_{\overrightarrow{p}}$ \cite[Theorem 2]{Chen-Damian} and a mixed
bound $A_{\overrightarrow{p}}-W^{\infty}_{\overrightarrow{p}}$ \cite[Theorem 3]{Chen-Damian} for the multilinear maximal operator, which are the multilinear versions of one-weight norm estimates \cite[Theorem 4.3]{HP}.
Still more recently, the multilinear fractional maximal operator and the multilinear fractional strong maximal operator associated with rectangles were studied in \cite{cao-xue} and \cite{cao-xue-yabuta}, respectively.

On the other hand, Tanaka and Terasawa \cite{TT} very recently developed a theory of weights for positive (linear) operators and the generalized Doob's maximal operators on a filtered measure space. In particular, two-weight norm inequalities \cite[Theorem 4.1]{TT} and one-weight norm estimates of Hyt\"{o}nen-P\'{e}rez type \cite[Theorem 5.1]{TT} for Doob's maximal operator were established by the use of the Carleson embedding theorem and the construction of principal set, respectively. Note that a filtered measure space naturally contains a filtered probability space with a filtration indexed by $\mathbb N$ and a Euclidean space with a dyadic filtration. It also contains a doubling metric measure space with dyadic lattice constructed by Hyt\"{o}nen and Kairema \cite{HK}. From this perspective, Dyadic Harmonic Analysis on the Euclidean space and Martingale Harmonic Analysis on a probability space can be unified on a filtered (infinite) measure space, as treated in \cite{H_Kem, Schilling, Stroock}.
We also mention that the Haar shift  operators were studied by Lacey,
Petermichl and Reguera in \cite{Lacey} and played an important role in the resolution of the so-called
$A_2$ conjecture in \cite{Hyt}. On a filtered measure space, these operators could be seen from the point-of-view of martingale theory.

Motivated by the works above, the purpose of this paper is to develop a theory of weights for multilinear Doob's maximal operator on a filtered measure space. For simplicity of notations, we only consider the bilinear case and all results can be extended to the multilinear case without essential difficulty.

The following theorem is our first main result, which
gives the weights for which the bilinear maximal
operator $\mathcal{M}$ is bounded from
$L^{p_1}(\omega_1)\times L^{p_2}(\omega_2 )$
to $L^p(\omega_1^{\frac{p}{p_1}}\omega_2^{\frac{p}{p_2}}).$ All unexplained notations can be found in Section \ref{Pre}.

\begin{theorem} \label{thm_AP}Let $\omega_1, ~\omega_2$ be weights and $1< p_1, ~p_2<\infty.$
Suppose that
$$v=\omega_1^{\frac{p}{p_1}}\omega_2^{\frac{p}{p_2}}\quad {\rm and} \quad \sigma_i=\omega_i^{-\frac{1}{p_i-1}}, ~i=1,~2, \quad 1/p=1/p_1+1/p_2.$$
\begin{enumerate}[\rm(1)]
\item  \label{re1} If $(v,~\omega_1, ~\omega_2)\in A_{\overrightarrow{p}}$, then there exists a positive constant $C$ such that
for all $f_1\in L^{p_1}(\sigma_1),~f_2\in L^{p_2}(\sigma_2)$ we have
      \begin{equation}
\label{Th_AA_1}\|\mathcal{M}(f_1\sigma_1,f_2\sigma_2)\|_{L^p(v)}\leq
C\|f_1\|_{L^{p_1}(\sigma_1)}\|f_2\|_{L^{p_2}(\sigma_2)}.
\end{equation}
We denote the smallest constant $C$ in \eqref{Th_AA_1} by $\|\mathcal{M}\|.$
Then it follows that
$\|\mathcal{M}\|\leq16\cdot4^{(q'-1)}p'_1p'_2[v,\omega_1,\omega_2]^{\frac{q'}{p}}_{A_{\overrightarrow P}},$ where $q=\min\{p_1,p_2\}.$
\item  \label{re2} Let $(\omega_1, \omega_2)\in RH_{\overrightarrow{p}}.$ If there exists a positive constant $C$ such that for all $
f_1\in L^{p_1}(\sigma_1),$ $f_2\in L^{p_2}(\sigma_2)$ we have
      \begin{equation}
\label{Th_AB_1}\|\mathcal{M}(f_1\sigma_1,f_2\sigma_2)\|_{L^p(v)}\leq
C\|f_1\|_{L^{p_1}(\sigma_1)}\|f_2\|_{L^{p_2}(\sigma_2)},
\end{equation}
then $(v,~\omega_1, ~\omega_2)\in A_{\overrightarrow{p}}.$ We denote the smallest constant $C$ in \eqref{Th_AB_1} by $\|\mathcal{M}\|.$
Then it follows that
$[v,\omega_1,\omega_2]_{A_{\overrightarrow P}}\leq\|\mathcal{M}\|^p[\omega_1, \omega_2]_{RH_{\overrightarrow{p}}}.$
\end{enumerate}
\end{theorem}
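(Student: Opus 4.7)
The plan is to handle the two directions by different techniques: part~(1), the quantitative sufficiency direction, is proved by a bilinear principal-set construction combined with the filtered Carleson embedding theorem, while part~(2) is a testing argument closed off by the reverse-H\"older hypothesis.

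For part~(1) I first reduce to nonnegative $f_1, f_2$ vanishing outside finitely many atoms of the filtration, so that the forthcoming stopping procedure terminates. Fix a stopping threshold $a > 1$ (eventually $a=4$) and, starting from each top atom, iteratively select the maximal sub-atoms $F' \subset F$ for which
$$\frac{1}{|F'|}\int_{F'} f_1\sigma_1 > a\,\frac{1}{|F|}\int_{F} f_1\sigma_1 \quad\text{or}\quad \frac{1}{|F'|}\int_{F'} f_2\sigma_2 > a\,\frac{1}{|F|}\int_{F} f_2\sigma_2.$$
A Chebyshev-type estimate shows the union of these children occupies at most $2/a$ of $F$, so the principal family $\mathcal{S}$ is sparse/Carleson. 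Between two consecutive principal generations each average grows by at most the factor $a$, yielding the pointwise sparse bound
$$\mathcal{M}(f_1\sigma_1,f_2\sigma_2)(x) \leq a^{2}\sum_{F\in\mathcal{S}}\prod_{i=1,2}\left(\frac{1}{|F|}\int_{F} f_i\sigma_i\right)\chi_{E_F}(x),$$
where $\{E_F\}$ is a disjoint refinement of $\mathcal{S}$. The $A_{\overrightarrow{p}}$ condition rewrites $v(F)\prod_i(\sigma_i(F)/|F|)^{p}$ as $[v,\omega_1,\omega_2]_{A_{\overrightarrow{p}}}\prod_i \sigma_i(F)^{p/p_i}$, transferring the $v$-integration into $\sigma_i$-integrations of the Carleson quotients $\int_F f_i\sigma_i/\sigma_i(F)$. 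H\"older's inequality with exponents $p_i/p$ separates the two factors, and the new filtered Carleson embedding theorem applied to each factor produces the desired $L^p(v)$ bound. Careful tracking of constants yields the advertised $16\cdot 4^{q'-1}p'_1 p'_2[v,\omega_1,\omega_2]_{A_{\overrightarrow{p}}}^{q'/p}$, with $q=\min\{p_1,p_2\}$: the factors $p'_i$ come from the two Carleson embedding norms, and the $4^{q'-1}$ from an $\ell^{q'}$-type combination of the two estimates.

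For part~(2) I test the hypothesized inequality with $f_1=f_2=\chi_B$ for an arbitrary atom $B$. Choosing $Q=B$ in the supremum defining $\mathcal{M}$ gives $\mathcal{M}(\chi_B\sigma_1,\chi_B\sigma_2)(x)\geq \sigma_1(B)\sigma_2(B)/|B|^{2}$ for every $x\in B$, so
$$\frac{\sigma_1(B)\sigma_2(B)}{|B|^{2}}\,v(B)^{1/p} \leq \|\mathcal{M}\|\,\sigma_1(B)^{1/p_1}\sigma_2(B)^{1/p_2}.$$
Using the identity $1/p+1/p'_1+1/p'_2=2$, this rearranges to the one-atom estimate $(v(B)/|B|)^{1/p}\prod_i(\sigma_i(B)/|B|)^{1/p'_i}\leq\|\mathcal{M}\|$. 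The hypothesis $(\omega_1,\omega_2)\in RH_{\overrightarrow{p}}$, which bridges averages of $v=\omega_1^{p/p_1}\omega_2^{p/p_2}$ and products of the averages of the individual $\omega_i$, supplies exactly the remaining factor $[\omega_1,\omega_2]^{1/p}_{RH_{\overrightarrow{p}}}$ needed to convert the one-atom estimate into the precise normalization of $[v,\omega_1,\omega_2]_{A_{\overrightarrow{p}}}$ used in the paper.

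The main obstacle is the bilinear principal-set construction in part~(1). In the linear case a single function drives the stopping rule; here the stopping must react to growth in either factor, so that both the Carleson packing and the summation against $[v,\omega_1,\omega_2]_{A_{\overrightarrow{p}}}$ are genuinely bilinear. Recovering the sharp exponent $q'/p$ rather than a coarser one forces a judicious choice of the stopping parameter $a$ and the application of the new filtered Carleson embedding theorem with the optimal exponent $q'=\max\{p'_1,p'_2\}$ when combining the two factors.
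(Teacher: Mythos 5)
Your part~(2) is essentially the paper's argument (test with $f_1=f_2=\chi_B$ for $B\in\mathcal{F}_i^0$, use the pointwise lower bound $\mathbb{E}_i(\sigma_1)\mathbb{E}_i(\sigma_2)\chi_B\le\mathcal{M}(\chi_B\sigma_1,\chi_B\sigma_2)$, then close with $RH_{\overrightarrow{p}}$ and H\"older for conditional expectations); just note that on a filtered measure space you must phrase it with conditional expectations rather than ``atoms,'' and that $B$ is realized as $\{\tau<\infty\}$ for $\tau=i\chi_B+\infty\chi_{B^c}$ so the $RH_{\overrightarrow{p}}$ hypothesis applies.

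Part~(1), however, has a genuine gap. First, a structural point: selecting ``maximal sub-atoms $F'\subset F$'' presupposes an atomic filtration; on a general filtered measure space the construction must be run with stopping times and level sets of $\mathbb{E}_j(h_1)\mathbb{E}_j(h_2)$, as in Section~3 of the paper (where, incidentally, the stopping rule reacts to the \emph{product} exceeding $4^{k+1}$, not to either factor separately). Second, and more seriously, your final step --- ``the new filtered Carleson embedding theorem applied to each factor'' --- requires a Carleson packing condition of the form $\sum_{F\subseteq Q}\sigma_i(F)\le A\,\sigma_i(Q)$ for the sparse family. Your family is sparse with respect to $\mu$, not with respect to $\sigma_i$; in the Euclidean setting one upgrades $\mu$-sparseness to $\sigma_i$-packing via $\sigma_i\in A_\infty$, which follows from $A_{\overrightarrow{p}}$ through the reverse H\"older inequality for linear $A_p$ classes --- precisely the tool the paper emphasizes is unavailable on filtered measure spaces. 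The paper circumvents this by a different mechanism: it raises the $A_{\overrightarrow{p}}$ inequality to the power $q'=\max\{p_1',p_2'\}$ and uses the new property $\chi_P\le 2\,\mathbb{E}(\chi_{E(P)}|\mathcal{F}_{\mathcal{K}_1(P)})\chi_P$ to insert $\chi_{E(P)}$ inside the conditional expectations of $\sigma_i$, after which the disjointness of the sets $E(P)$ and Doob's maximal inequality for $M^{\sigma_i}$ finish the estimate with no Carleson embedding at all. This is also where the advertised constant comes from: the exponent $q'/p$ on $[v,\omega_1,\omega_2]_{A_{\overrightarrow{p}}}$ and the factor $4^{q'-1}$ are produced by that power-$q'$ trick, whereas your sketch applies $A_{\overrightarrow{p}}$ only to the first power and cannot yield them; the attribution of $4^{q'-1}$ to ``an $\ell^{q'}$-type combination'' does not correspond to any step you have actually justified.
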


First, Theorem \ref{thm_AP} (more precisely, Corollary \ref{cor_AP} below) is a bilinear analogue of \cite[Corollary 4.5]{TT}. We remark that the reverse H\"{o}lder's condition $RH_{\vec{p}}$ is automatically true in the linear case. Second, our theorem is an extension of \cite[Theorem 3.7]{Lerner A.K. Ombrosi S.} and \cite[Theorem 1.2]{Li Moen Sun} to a filtered measure space.
In \cite{Lerner A.K. Ombrosi S.} they showed that the multilinear $A_{\vec{p}}$ condition has interesting characterization in terms of the linear $A_p$ classes \cite[Theorem 3.6]{Lerner A.K. Ombrosi S.}. Then their proof was based on the Reverse H\"{o}lder's inequality for linear $A_p$ classes. However, they are invalid on a filtered measure space (even on a filtered probability spaces without regularity condition \cite[p.262]{R. L. Long}). Li, Moen and Sun \cite[Theorem 1.2]{Li Moen Sun} found the optimal power on $[v,\omega_1,\omega_2]_{A_{\overrightarrow P}}$ and their proof depends very much on the properties of the sparse family on Euclidean spaces.
Because a filtered measure space contains a Euclidean space with a dyadic filtration, our theorem gives sharp bound for the bilinear maximal operator $\mathcal{M}.$
Motivated by \cite[Theorem 1.2]{Li Moen Sun}, our proof is mainly based on the bilinear construction of principal sets on filtered measure spaces.
The germ of principal set appeared as the sparse family on $\mathbb R^n$ (see \cite{HP,W. Damian} for more information) and was successfully constructed on the filtered measure space in \cite[pp.942-943]{TT}.
We find a new property (Section \ref{constru}, P.\ref{new}) of the construction and we call it the conditional sparsity of principal sets.

Theorem \ref{thm_AP} also completes the bilinear version of one-weight theory in the martingale setting \cite[Proposition 1.15]{CL1}. In fact, in \cite{CL1} only the second part of Theorem \ref{thm_AP} on a filtered probability space was proved. In addition, it is clear that \eqref{Th_AA_1} implies the condition $S_{\overrightarrow{p}}.$
Then it follows from Theorem \ref{thm_AP} that the condition $A_{\overrightarrow{p}}$ implies the condition $S_{\overrightarrow{p}}.$ Hence, Theorem \ref{thm_AP} is a bilinear analogue of the equivalence between $A_p$ and $S_p$ in \cite{Hunt}.

Our second main purpose is to character two-weight inequalities for the bilinear maximal
operator on the filtered measure space. Assuming the reverse H\"{o}lder's condition,  Theorem \ref{thm_Sp} below is a bilinear version of Sawyer's result \cite[Theorem A]{Sawyer E T.} on filtered measure spaces.

\begin{theorem}\label{thm_Sp} Let $v,\omega_1, \omega_2$ be weights and $1<p_1, p_2<\infty.$
Suppose that
$1/p=1/p_1+1/p_2$ and $(\omega_1, \omega_2)\in RH_{\overrightarrow{p}},$
then the following statements
are equivalent
\begin{enumerate}[$\rm(1)$]
\item \label{Spa} There exists a positive constant $C$ such that for all $f_1\in L^{p_1}(\sigma_1),~f_2\in L^{p_2}(\sigma_2)$ we have
\begin{equation}\label{thmSp}
\|\mathcal{M}(f_1\sigma_1,f_2\sigma_2)\|_{L^p(v)}\leq
C\|f_1\|_{L^{p_1}(\sigma_1)}\|f_2\|_{L^{p_2}(\sigma_2)}
\end{equation}
where $\sigma_i=\omega_i^{-\frac{1}{p_i-1}}, ~i=1,~2.$
\item \label{Spb} The triple of weights $(v,~\omega_1, ~\omega_2)$
satisfies the condition $S_{\overrightarrow{p}}.$
               \end{enumerate}
Moreover, we denote the smallest constant $C$ in (\ref{thmSp})
by $\|\mathcal{M}\|.$
Then it follows that
$$[v,\overrightarrow{\omega}]_{S_{\overrightarrow{p}}}
\leq\|\mathcal{M}\|\leq 32p'_1p'_2
[v,\overrightarrow{\omega}]_{S_{\overrightarrow{p}}}[\omega_1, \omega_2]^{\frac{1}{p}}_{RH_{\overrightarrow{p}}}.$$
\end{theorem}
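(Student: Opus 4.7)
The plan is to prove the two implications separately. The lower bound $[v,\overrightarrow{\omega}]_{S_{\overrightarrow{p}}} \leq \|\mathcal{M}\|$ is the easy direction: fixing an atom $Q$ of the filtration and testing \eqref{thmSp} with $f_i := \chi_Q$, so that $\|f_i\|_{L^{p_i}(\sigma_i)} = \sigma_i(Q)^{1/p_i}$, gives the $S_{\overrightarrow{p}}$ testing functional once the $L^p(v)$-integral on the left-hand side is restricted to $Q$ (using $\mathcal{M}(\chi_Q\sigma_1,\chi_Q\sigma_2)(x) \geq \prod_{i=1}^{2} \langle\sigma_i\rangle_Q$ for $x\in Q$). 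No reverse H\"older's condition enters here, which is consistent with the stated bound.

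For (\ref{Spb}) $\Rightarrow$ (\ref{Spa}) I would follow the bilinear Sawyer template adapted to the filtered setting. Given $f_1,f_2$, invoke the bilinear principal set construction from Section~3 to produce a sparse family of stopping atoms $\{E_k\}$ with associated major parts $\widehat{E}_k$, chosen so that the bilinear averages $\prod_{i=1}^{2}\langle f_i\sigma_i\rangle_{\cdot}$ grow geometrically between consecutive levels. This reduces the operator bound to a discrete estimate roughly of the form
$$\|\mathcal{M}(f_1\sigma_1,f_2\sigma_2)\|_{L^p(v)}^p \lesssim \sum_k \Big(\prod_{i=1}^{2}\langle f_i\sigma_i\rangle_{E_k}\Big)^{p} v(\widehat{E}_k).$$
The testing condition $S_{\overrightarrow{p}}$ is used precisely to verify that the coefficients arising from $v(\widehat{E}_k)$ form Carleson sequences with respect to $\sigma_i$, and the reverse H\"older's condition $RH_{\overrightarrow{p}}$ permits transferring between the product of single-weight averages $\prod_{i=1}^{2}\langle\omega_i\rangle_{E_k}^{p/p_i}$ and the single average $\langle v\rangle_{E_k}$, which is the source of the factor $[\omega_1,\omega_2]^{1/p}_{RH_{\overrightarrow{p}}}$ in the upper estimate. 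A H\"older split on the index $k$ with exponents $p_1/p$ and $p_2/p$, followed by the new Carleson embedding theorem applied once for each $i=1,2$, then dominates everything by $\|f_1\|_{L^{p_1}(\sigma_1)}\|f_2\|_{L^{p_2}(\sigma_2)}$; tracking numerical constants through these steps should produce the claimed $32\,p'_1p'_2$.

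The main obstacle will be aligning the bilinear principal set construction with the testing condition. In the linear Sawyer argument a single stopping time on averages suffices, while in the bilinear setting $\langle f_1\sigma_1\rangle_Q$ and $\langle f_2\sigma_2\rangle_Q$ may naturally stop at different atoms, and one needs a single sparse family on which the bilinear $S_{\overrightarrow{p}}$ functional is effective while the Carleson sequences associated to $\sigma_1$ and $\sigma_2$ simultaneously close up. This coordination between the two ``factors'' is precisely the new property flagged in Section~3; once it is in place, the remainder of the argument is organized bookkeeping along the lines of \cite{TT} for the linear Doob case.
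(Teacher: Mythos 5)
Your overall architecture matches the paper's: the easy direction by testing \eqref{thmSp} on characteristic functions of the testing sets, and the hard direction via the principal set construction of Section~3 followed by a Carleson embedding argument in which $S_{\overrightarrow{p}}$ verifies the Carleson condition. Two of the mechanisms you describe, however, would not work as stated.

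First, you misplace the role of the reverse H\"older condition. It is \emph{not} used to transfer between $\prod_i\langle\omega_i\rangle^{p/p_i}$ and $\langle v\rangle$ (that comparison is neither what $RH_{\overrightarrow{p}}$ asserts nor true in general here). In the paper, $RH_{\overrightarrow{p}}$ is a condition on $(\sigma_1,\sigma_2)$ and enters at exactly one point: the testing condition bounds $\sum_{A^l_P\subseteq\{\tau<+\infty\}}a_{A^l_P}$ by $[v,\overrightarrow{\omega}]^p_{S_{\overrightarrow{p}}}\,\sigma_1(\{\tau<+\infty\})^{p/p_1}\sigma_2(\{\tau<+\infty\})^{p/p_2}$, and $RH_{\overrightarrow{p}}$ converts this product of powers of integrals into $\int_{\{\tau<+\infty\}}\sigma_1^{p/p_1}\sigma_2^{p/p_2}d\mu$, which is the majorant required by the Carleson embedding hypothesis \eqref{lem_Carleson_ass}. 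Second, your plan to ``H\"older-split on the index $k$ and apply the Carleson embedding once for each $i$'' does not close: the coefficients $a_{A_P^l}=\int_{A_P^l}\bigl(\mathbb E(\sigma_1|\mathcal F_{\mathcal K_1(P)})\mathbb E(\sigma_2|\mathcal F_{\mathcal K_1(P)})\bigr)^p v\,d\mu$ are controlled by $S_{\overrightarrow{p}}$ only against the single measure $\sigma_1^{p/p_1}\sigma_2^{p/p_2}d\mu$, and nothing in the hypotheses lets you factor them into two sequences each satisfying a \emph{linear} Carleson condition with respect to $\sigma_i$. The paper instead proves a genuinely bilinear Carleson embedding theorem (Theorem \ref{Carleson_thm}): a level-set argument on the product $\mathbb E^{\sigma_1}(h_1\sigma_1^{-1}|\cdot)\mathbb E^{\sigma_2}(h_2\sigma_2^{-1}|\cdot)$ reduces the sum to $\int\mathcal M^{\sigma_1,\sigma_2}(\cdot,\cdot)^p\sigma_1^{p/p_1}\sigma_2^{p/p_2}d\mu$, and only then are H\"older and Doob's inequality applied inside the integral, producing the $(p_1'p_2')^p$ factor. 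Two smaller points: a general filtered measure space has no ``atoms,'' so the easy direction must test on $\{\tau<+\infty\}$ for $\tau\in\mathcal T_i$ (as the definition of $S_{\overrightarrow{p}}$ dictates); and since $\mathbb E(\sigma_i|\mathcal F_{\mathcal K_1(P)})$ is not constant on $P$, each $E(P)$ must be further sliced into the sets $A_P^l$ on which this product is within a factor $2$ of its essential infimum --- this slicing is what makes the $\operatorname{essinf}$ in the Carleson embedding usable and is absent from your sketch.
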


We also obtain Hyt\"{o}nen-P\'{e}rez type weighted estimates \cite[theorem 4.3]{HP} for the bilinear maximal operator on filtered measure spaces. To be precise, we prove the following Theorems \ref{theorem_Bp} and \ref{theorem_bi A Fujii}. Their linear cases were studied in \cite[Theorem 1.7]{CZZJ2020}.

\begin{theorem}\label{theorem_Bp}
If $(v,\overrightarrow{\omega})\in B_{\overrightarrow{p}},$ then the following statements
are valid:\begin{enumerate}[\rm(1)]
\item \label{thm B 1}There exists a positive constant $C$ such that
for all $f\in L^{p_1}(\omega_1),~g\in L^{p_2}(\omega_2)$ we have
\begin{equation}\label{BPaa}
\|\mathcal{M}(f_1,f_2)\|_{L^p(v)}\leq
C\|f_1\|_{L^{p_1}(\omega_1)}\|f_2\|_{L^{p_2}(\omega_2)}.
\end{equation}
\item \label{thm B 2}There exists a positive constant $C$ such that for all $f\in L^{p_1}(\sigma_1),~g\in L^{p_2}(\sigma_2)$ we have
\begin{equation}\label{BPbb}
\|\mathcal{M}(f_1\sigma_1,f_2\sigma_2)\|_{L^p(v)}\leq
C\|f_1\|_{L^{p_1}(\sigma_1)}\|f_2\|_{L^{p_2}(\sigma_2)}.
\end{equation}
\end{enumerate}
Moreover, we denote the smallest constants $C$ in \eqref{BPaa}
and \eqref{BPbb} by $\|\mathcal{M}\|$ and $\|\mathcal{M}\|',$ respectively.
Then it follows that $\|\mathcal{M}\|=\|\mathcal{M}\|'\leq32(2e)^{\frac{1}{p}}p'_1p'_2
[v,\overrightarrow{\omega}]_{B_{\overrightarrow{p}}}^{\frac{1}{p}}.$
\end{theorem}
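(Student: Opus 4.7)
The plan is first to observe that the two assertions are equivalent through the substitution $f_i\mapsto f_i\sigma_i$, and then to establish \eqref{BPbb} using the bilinear principal set construction promised in Section~3 together with the new Carleson embedding theorem, which is what will naturally expose the constant $[v,\overrightarrow{\omega}]_{B_{\overrightarrow{p}}}^{1/p}$.

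The equivalence of (1) and (2) follows from the elementary identity $\sigma_i^{p_i}\omega_i=\sigma_i$, which gives $\|f_i\sigma_i\|_{L^{p_i}(\omega_i)}=\|f_i\|_{L^{p_i}(\sigma_i)}$. Hence \eqref{BPaa} applied to the pair $(f_1\sigma_1,f_2\sigma_2)$ is literally \eqref{BPbb} applied to $(f_1,f_2)$, and the two sharp constants coincide: $\|\mathcal{M}\|=\|\mathcal{M}\|'$. It therefore suffices to prove \eqref{BPbb} with the claimed bound.

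Next, fix $f_1\in L^{p_1}(\sigma_1)$, $f_2\in L^{p_2}(\sigma_2)$ and discretise through the dyadic level sets of $\mathcal{M}(f_1\sigma_1,f_2\sigma_2)$. Following the strategy of Tanaka--Terasawa, I would invoke the bilinear principal set construction from Section~3 to obtain a disjoint family $\mathcal F$ together with, on each $F\in\mathcal F$, an atom $E_F\subset F$ on which both averages $\langle f_i\sigma_i\rangle_F$ dominate $\mathcal{M}(f_1\sigma_1,f_2\sigma_2)$ up to a universal factor. The new pointwise property of the bilinear principal family stressed in the introduction is precisely what allows a factorised control of the product of the two averages on $F$, and reduces the left-hand side of \eqref{BPbb} to a bilinear Carleson-type sum
$$
\|\mathcal{M}(f_1\sigma_1,f_2\sigma_2)\|_{L^p(v)}^{p}\;\leq\; 4^{p}\sum_{F\in\mathcal F}\langle f_1\sigma_1\rangle_F^{\,p}\,\langle f_2\sigma_2\rangle_F^{\,p}\,v(E_F),
$$
where the factor $4^{p}$ tracks the dyadic discretisation. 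I would then apply H\"older's inequality with exponents $p_1/p$ and $p_2/p$ so that this sum factors into two bilinear Carleson pieces of the shape $\sum_F \langle f_i\sigma_i\rangle_F^{\,p_i}\,\alpha_F^{(i)}$ (one per $i\in\{1,2\}$) and a residual factor built from $v$ and the parent averages of $\omega_1,\omega_2$. The first two are estimated by the new Carleson embedding theorem applied with the measure $\sigma_i$, producing $p_i'\|f_i\|_{L^{p_i}(\sigma_i)}$ and altogether the factor $p_1'p_2'$. The residual factor is exactly what the testing condition $B_{\overrightarrow{p}}$ is designed to control, contributing $[v,\overrightarrow{\omega}]_{B_{\overrightarrow{p}}}^{1/p}$.

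The step I expect to be the main obstacle is the precise bookkeeping that produces the absolute constant $32(2e)^{1/p}$. The factor $(2e)^{1/p}$ is the signature of the Hyt\"onen--P\'erez mechanism and comes from the optimisation of an exponent in the entropy-type estimate relating the $B_{\overrightarrow{p}}$-quantity to the discrete Carleson sum; one has to verify that the bilinear principal set construction does not introduce any extraneous constant which would spoil this sharp exponent. The numerical $32$ will be tracked through the $4^p$ above together with the standard dyadic splitting of the family $\mathcal F$ according to the size of $\langle\omega_i\rangle$ on $F$, exactly as in the linear case. Once this bookkeeping is in place, the rest of the argument essentially parallels the linear Hyt\"onen--P\'erez proof, with all genuinely bilinear input confined to the construction of the principal family.
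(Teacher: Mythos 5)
Your reduction of (1) to (2) and your overall skeleton (bilinear principal sets followed by a Carleson embedding argument) match the paper's strategy, but the proposal stops exactly where the actual content of the theorem lies, and the route you sketch for that part is not the one that works. First, you describe $B_{\overrightarrow{p}}$ as a ``testing condition'' controlling a ``residual factor'' after a H\"older factorisation of the Carleson sum into two separate pieces, one per index $i$. The condition $B_{\overrightarrow{p}}$ is not a testing condition: it is the pointwise bound $\mathbb E_i(v)\,\mathbb E_i(\sigma_1)^p\mathbb E_i(\sigma_2)^p\leq C\exp\bigl(\mathbb E_i(\log(\sigma_1^{p/p_1}\sigma_2^{p/p_2}))\bigr)$, which couples $v,\sigma_1,\sigma_2$ through a conditional geometric mean and does not split along the two indices. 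The paper never performs your factorisation; instead it keeps the product together, sets $a_{A_P^l}=\int_{A_P^l}\bigl(\mathbb E(\sigma_1|\mathcal F_{\mathcal K_1(P)})\mathbb E(\sigma_2|\mathcal F_{\mathcal K_1(P)})\bigr)^p v\,d\mu$ on the sets $A_P^l=P\cap\{2^l<\mathbb E(\sigma_1|\mathcal F_{\mathcal K_1(P)})\mathbb E(\sigma_2|\mathcal F_{\mathcal K_1(P)})\leq 2^{l+1}\}$, and verifies a single joint Carleson condition for this sequence against the measure $\sigma_1^{p/p_1}\sigma_2^{p/p_2}d\mu$; the splitting into the two indices happens only afterwards, inside the proof of the Carleson embedding theorem (Theorem~4.1), via H\"older and Doob applied to $M^{\sigma_1}$ and $M^{\sigma_2}$, which is where $p_1'p_2'$ arises.

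Second, and more seriously, the step you defer as ``bookkeeping'' is the heart of the proof and is genuinely missing. To verify the Carleson condition one must convert the quantity $\exp\bigl(\mathbb E(\log(\sigma_1^{p/p_1}\sigma_2^{p/p_2})|\mathcal F_{\mathcal K_1(P)})\bigr)$ supplied by $B_{\overrightarrow{p}}$ into something integrable against $d\mu$ over $\{\tau<+\infty\}$. The paper does this by writing the exponential as the $r$-th power of $\exp\bigl(\mathbb E(\log(\sigma_1^{p/rp_1}\sigma_2^{p/rp_2}\chi_{A_P^l})|\mathcal F_{\mathcal K_1(P)})\bigr)$ for an arbitrary $r>1$, applying Jensen's inequality to dominate this by $\mathbb E(\sigma_1^{p/rp_1}\sigma_2^{p/rp_2}\chi_{A_P^l}|\mathcal F_{\mathcal K_1(P)})^r$, inserting the factor $\mathbb E(\chi_{E(P)}|\mathcal F_{\mathcal K_1(P)})$ from property (P.3) of the principal sets (this is the source of the factor $2$), using disjointness of the sets $E(P)\cap A_P^l$ to sum, and invoking Doob's $L^r$ inequality with constant $(r/(r-1))^r$; letting $r\to+\infty$ produces the constant $e$ and hence $(2e)^{1/p}$ in the final bound. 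Nothing in your proposal identifies this mechanism, and without it there is no way to pass from the $B_{\overrightarrow{p}}$ hypothesis to the Carleson condition, so the proof is incomplete at its essential point.
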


\begin{theorem}\label{theorem_bi A Fujii}
If $(v,\omega_1, \omega_2)\in A_{\overrightarrow{p}}$
and $(\omega_1, \omega_2)\in W^{\infty}_{\overrightarrow{p}},$ then the following statements
are valid:\begin{enumerate}[\rm(1)]
\item \label{thm BC 1}There exists a positive constant $C$ such that for all $f\in L^{p_1}(\omega_1),~g\in L^{p_2}(\omega_2)$ we have
\begin{equation}\label{BPaaC}
\|\mathcal{M}(f_1,f_2)\|_{L^p(v)}\leq
C\|f_1\|_{L^{p_1}(\omega_1)}\|f_2\|_{L^{p_2}(\omega_2)}.
\end{equation}
\item \label{thm BC 2}There exists a positive constant $C$ such that for all $f\in L^{p_1}(\sigma_1),~g\in L^{p_2}(\sigma_2),$ we have
\begin{equation}\label{BPbbC}
\|\mathcal{M}(f_1\sigma_1,f_2\sigma_2)\|_{L^p(v)}\leq
C\|f_1\|_{L^{p_1}(\sigma_1)}\|f_2\|_{L^{p_2}(\sigma_2)}.
\end{equation}
\end{enumerate}
Moreover, we denote the smallest constants $C$ in \eqref{BPaaC}
and \eqref{BPbbC} by $\|\mathcal{M}\|$ and $\|\mathcal{M}\|',$ respectively.
Then it follows that $$\|\mathcal{M}\|=\|\mathcal{M}\|'
\leq32\cdot2^{\frac{1}{p}}p'_1p'_2 [v,\omega_1, \omega_2]_{A_{\overrightarrow{p}}}^{\frac{1}{p}}
[\omega_1, \omega_2]_{W_{\overrightarrow{p}}^\infty}^{\frac{1}{p}}.$$
\end{theorem}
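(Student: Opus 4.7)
The plan is to follow the same template as Theorem~\ref{theorem_Bp}, but with the single $B_{\overrightarrow{p}}$ constant replaced by a product of the $A_{\overrightarrow{p}}$ and $W^{\infty}_{\overrightarrow{p}}$ constants. A first purely formal step is the equivalence $\|\mathcal{M}\|=\|\mathcal{M}\|'$: substituting $f_i\mapsto f_i\sigma_i$ into \eqref{BPaaC} produces \eqref{BPbbC} exactly, because $\sigma_i=\omega_i^{-1/(p_i-1)}$ gives the identity $\sigma_i^{p_i}\omega_i=\sigma_i$, so that $\|f_i\sigma_i\|_{L^{p_i}(\omega_i)}=\|f_i\|_{L^{p_i}(\sigma_i)}$. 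The reverse substitution is permissible since $\sigma_i>0$, and hence it suffices to prove~(2).

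For (2), I would apply the bilinear principal-set construction of Section~3 to $\mathcal{M}(f_1\sigma_1,f_2\sigma_2)$. This provides a family $\mathcal{F}$ of stopping atoms with pairwise disjoint principal parts $\{E(F)\}_{F\in\mathcal{F}}$ on which $\mathcal{M}(f_1\sigma_1,f_2\sigma_2)$ is controlled, up to an absolute constant, by the frozen averages $a(F):=\prod_{i=1}^{2}\langle f_i\sigma_i\rangle_F$, and hence
\begin{equation*}
\|\mathcal{M}(f_1\sigma_1,f_2\sigma_2)\|_{L^p(v)}^p \leq C\sum_{F\in\mathcal{F}} a(F)^p\, v(F).
\end{equation*}
At this point the $W^\infty_{\overrightarrow{p}}$ condition enters as a Fujii--Wilson type replacement $v(F)\leq 2\,[\omega_1,\omega_2]_{W^\infty_{\overrightarrow{p}}}\, v(E(F))$ valid uniformly on principal atoms $F\in\mathcal{F}$; this is exactly the bilinear substitute for the one-weight trick used by Hyt\"onen--P\'erez in~\cite{HP} and by Tanaka--Terasawa in~\cite{TT}. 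After this substitution the $v(E(F))$ factors are pairwise disjoint, so the resulting sum is of Carleson type. I would then invoke the new bilinear Carleson embedding theorem of the paper, which absorbs the factor $[v,\omega_1,\omega_2]_{A_{\overrightarrow{p}}}$ and produces the product $\|f_1\|_{L^{p_1}(\sigma_1)}^p\|f_2\|_{L^{p_2}(\sigma_2)}^p$. Taking $p$-th roots and tracking the Carleson constants (which are the source of the factor $p'_1 p'_2$ and the absolute constant $32$) yields the advertised bound.

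The main obstacle is the $W^\infty$ replacement step, namely verifying the Fujii--Wilson type inequality $v(F)\leq 2\,[\omega_1,\omega_2]_{W^\infty_{\overrightarrow{p}}}\, v(E(F))$ tailored to the bilinear principal-set construction. In contrast to the linear case of~\cite{TT}, the two weights are coupled through $v=\omega_1^{p/p_1}\omega_2^{p/p_2}$ and through the $A_{\overrightarrow{p}}$ condition, so one has to check that the new property of bilinear principal sets emphasised in Section~3 is compatible with the exponential/$L\log L$ type averages implicit in the definition of $W^\infty_{\overrightarrow{p}}$. Once this inequality is in hand the remaining arithmetic is routine: insert it into the sum above, apply the bilinear Carleson embedding theorem, and collect the constants to obtain $32\cdot 2^{1/p}p'_1 p'_2 [v,\omega_1,\omega_2]_{A_{\overrightarrow{p}}}^{1/p}[\omega_1,\omega_2]_{W^\infty_{\overrightarrow{p}}}^{1/p}$.
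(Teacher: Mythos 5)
Your high-level skeleton (principal sets, then a Carleson-type sum, then the bilinear Carleson embedding theorem) matches the paper's, and the reduction $\|\mathcal M\|=\|\mathcal M\|'$ is fine. But the step you yourself flag as ``the main obstacle'' --- the Fujii--Wilson type replacement $v(F)\le 2\,[\omega_1,\omega_2]_{W^\infty_{\overrightarrow p}}\,v(E(F))$ --- is not correct as stated and is not how $W^\infty_{\overrightarrow p}$ enters. In this theorem $v$ is an independent third weight (it is not assumed that $v=\omega_1^{p/p_1}\omega_2^{p/p_2}$), and Definition \ref{definition Wp} is a condition on $(\sigma_1,\sigma_2)$ alone: it bounds $\int M(\sigma_1\chi)^{p/p_1}M(\sigma_2\chi)^{p/p_2}d\mu$ by $\int\sigma_1^{p/p_1}\sigma_2^{p/p_2}d\mu$ and says nothing about $v$-measures of principal sets. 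The only measure comparison the construction of Section \ref{constru} provides is $\mu(P)\le 2\mu(E(P))$ for the underlying measure $\mu$, upgraded to the conditional form (P.3); a bound $v(P)\le C\,v(E(P))$ would be an $A_\infty$-type hypothesis on $v$ that is neither assumed nor derivable here.

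The actual mechanism is different: both hypotheses are consumed in \emph{verifying the Carleson packing condition}, not inside the embedding theorem. One sets $a_{A_P^l}=\int_{A_P^l}\big(\mathbb E(\sigma_1|\mathcal F_{\mathcal K_1(P)})\mathbb E(\sigma_2|\mathcal F_{\mathcal K_1(P)})\big)^p v\,d\mu$ on the level sets $A_P^l=P\cap\{2^l<\mathbb E(\sigma_1|\mathcal F_{\mathcal K_1(P)})\mathbb E(\sigma_2|\mathcal F_{\mathcal K_1(P)})\le 2^{l+1}\}$, so that \eqref{10} holds with constant $32^p$. To check $\sum_{A_P^l\subseteq\{\tau<+\infty\}}a_{A_P^l}\le 2[v,\omega_1,\omega_2]_{A_{\overrightarrow p}}[\omega_1,\omega_2]_{W^\infty_{\overrightarrow p}}\int_{\{\tau<+\infty\}}\sigma_1^{p/p_1}\sigma_2^{p/p_2}d\mu$ one (i) writes $v$ as $\mathbb E(v|\mathcal F_{\mathcal K_1(P)})$ under the integral and applies the pointwise $A_{\overrightarrow p}$ bound to reduce the integrand to $\mathbb E(\sigma_1|\mathcal F_{\mathcal K_1(P)})^{p/p_1}\mathbb E(\sigma_2|\mathcal F_{\mathcal K_1(P)})^{p/p_2}$; (ii) inserts $\mathbb E(\chi_{E(P)}|\mathcal F_{\mathcal K_1(P)})$ via (P.3) to localize the sum to the pairwise disjoint sets $E(P)\cap A_P^l$; and (iii) dominates the conditional expectations by $M(\sigma_1\chi_{\{\tau<+\infty\}})^{p/p_1}M(\sigma_2\chi_{\{\tau<+\infty\}})^{p/p_2}$ and applies Definition \ref{definition Wp}. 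Only after this does Theorem \ref{Carleson_thm} apply, producing the factor $(p_1'p_2')^p$ and the product of $L^{p_i}(\sigma_i)$ norms. With steps (ii)--(iii) replaced by your $v(F)\le 2[\,\cdot\,]\,v(E(F))$ claim, the argument does not close.
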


\begin{remark}\label{chen-wendy} Using a dyadic discretization technique, Chen and Dami\'{a}n \cite{Chen-Damian} investigated Theorems \ref{thm_Sp}, \ref{theorem_Bp} and \ref {theorem_bi A Fujii} on $\mathbb{R}^n.$ In addition, Cao and Xue \cite{cao-xue} and Sehba \cite{Sehba} gave the similar theorems for bilinear fractional maximal function on $\mathbb{R}^n,$ respectively.
\end{remark}

To prove Theorems \ref{thm_Sp}, \ref{theorem_Bp} and \ref {theorem_bi A Fujii},
the key ingredient is the bilinear version of Carleson embedding theorem
associated with the collection of principal sets developed in Section \ref{Car}.
Note that Hyt\"{o}nen and P\'{e}rez gave the dyadic Carleson embedding theorem \cite[Theorem 4.5]{HP} (see \cite{Pereyra} for more information), and
Chen and Dami\'{a}n obtained its multilinear analogue \cite[Lemma 3]{Chen-Damian} on $\mathbb{R}^n.$ In order to provide some two-weight norm estimates for multilinear fractional maximal function, Sehba \cite{Sehba} extensively discussed the more general Carleson embedding theorem.
Tanaka and Terasawa \cite[Section 3]{TT} introduced a refinement of the Carleson embedding theorem on a filtered measure space.
In the present paper, our Carleson embedding theorem associated with the collection of principal sets is very different from \cite[Theorem 3.1]{TT}; see Theorem \ref{Carleson_thm} in Section \ref{Car}.

\begin{remark}\label{RH} As treated on filtered probability spaces \cite{CL1} and on Euclidean spaces \cite{cao-xue,Chen-Damian,Sehba}, we do not know if the reverse H\"{o}lder condition $RH_{\vec{p}}$ in the theorems above is essential.
\end{remark}

The article is organized as follows.
In Section \ref{Pre}, we state some preliminaries. We construct bilinear versions of principal sets
and Carleson embedding theorem in Section \ref{constru} and Section \ref{Car}, respectively. In Section \ref{proofs}, we provide the proofs of the above theorems.

The letter $C$ will be used for constants that may change from one occurrence to another.

\section{Preliminaries}\label{Pre}
This section consists of the preliminaries for this paper.
\subsection{bilinear maximal operator on filtered measure spaces}
In this subsection we introduce the bilinear maximal operator on filtered measure spaces, which are standard \cite{TT}. Let a triplet $(\Omega,\mathcal{F},\mu)$ be a measure space. Denote by $\mathcal{F}^0$ the collection of sets in $\mathcal{F}$ with
finite measure. The measure space $(\Omega,\mathcal{F},\mu)$ is called $\sigma$-finite if there exist sets $E_i\in\mathcal{F}^0$ such
that $\Omega=\bigcup\limits_{i=0}^{\infty}E_i$. In this paper all measure spaces are assumed to be $\sigma$-finite. Let $\mathcal{A}\subset\mathcal{F}^0$ be
an arbitrary subset of $\mathcal{F}^0$. An $\mathcal{F}$-measurable function $f:\Omega\rightarrow \mathbb R$ is called $\mathcal{A}$-integrable if it is
integrable on all sets of $\mathcal{A}$, i.e.,
$\chi_Ef\in L^1(\mathcal{F},\mu)$ for all $E\in \mathcal{A}$.
Denote the collection of all such functions by $L^1_{\mathcal{A}}(\mathcal{F},\mu).$

If $\mathcal{G}\subset\mathcal{F}$ is another $\sigma$-algebra, it is called a sub-$\sigma$-algebra of $\mathcal{F}$. A function $g\in L^1_{\mathcal{G}^0}(\mathcal{G},\mu)$ is
called the conditional expectation of $f\in L^1_{\mathcal{G}^0}(\mathcal{F},\mu)$ with respect to $\mathcal{G}$ if there holds
\begin{equation*}
\int_Gfd\mu=\int_Ggd\mu,\quad \forall G\in\mathcal{G}^0.
\end{equation*}
The conditional expectation of $f$ with respect to $\mathcal{G}$ will be denoted by $\mathbb E(f|\mathcal{G})$, which exists
uniquely in $L^1_{\mathcal{G}^0}(\mathcal{G},\mu)$ due to $\sigma$-finiteness of $(\Omega,\mathcal{G},\mu).$

A family of sub-$\sigma$-algebras $(\mathcal{F}_i)_{i\in \mathbb Z}$ is called a filtration of $\mathcal{F}$ if $\mathcal{F}_i\subset\mathcal{F}_j\subset\mathcal{F}$ whenever
$i, j \in \mathbb Z$ and $i < j.$ We call a quadruplet $(\Omega,\mathcal{F},\mu; (\mathcal{F}_i)_{i\in \mathbb Z})$
a $\sigma$-finite filtered measure space. As remarked in Section 1, it
contains a filtered probability space with a filtration indexed by $\mathbb N,$
a Euclidean space with a dyadic filtration and doubling metric space with dyadic lattice.

We
write
\begin{equation*}
\mathcal{L}:=\bigcap\limits_{i\in \mathbb Z}L^1_{\mathcal{F}^0_i}(\mathcal{F},\mu).\end{equation*}
Notice that
\begin{equation*}
L^1_{\mathcal{F}^0_i}(\mathcal{F},\mu)\supset L^1_{\mathcal{F}^0_j}(\mathcal{F},\mu)\end{equation*} whenever $i < j.$
For a function $f\in\mathcal{L}$ we will denote
$\mathbb E(f|\mathcal{F}_i)$ by $\mathbb E_i(f).$ By the tower rule of conditional expectations, a family of functions
$\mathbb E_i(f)\in L^1_{\mathcal{F}^0_i}(\mathcal{F},\mu)$ becomes a martingale.

By a weight we mean a nonnegative function which belongs to $\mathcal{L}$ and, by a convention, we
will denote the set of all weights by $\mathcal{L}^+.$

Let $(\Omega,\mathcal{F},\mu; (\mathcal{F}_i)_{i\in \mathbb Z})$  be a $\sigma$-finite filtered measure space. Then a function
$\tau:~ \Omega\rightarrow\{-\infty\}\cup \mathbb Z\cup\{+\infty\}$ is called a stopping time if for any $i\in \mathbb Z,$ we have
$\{\tau=i\}\in \mathcal{F}_i.$ The family of all stopping times is denoted by $\mathcal{T}.$
Fixing $i\in \mathbb Z,$ we denote $\mathcal{T}_i=\{\tau\in \mathcal{T}:~\tau\geq i\}.$

Suppose that functions $f\in\mathcal{L}$ and $g\in\mathcal{L},$ the maximal operator
and bilinear maximal operator are defined by
$$Mf=\sup_{i\in \mathbb Z}|\mathbb E_i(f)|\text{ and }
\mathcal{M}(f,g)=\sup_{i\in \mathbb Z}|\mathbb E_i(f)||\mathbb E_i(g)|,$$
respectively. Fix $i\in \mathbb Z,$ we define the tailed maximal operator
and tailed bilinear maximal operator by $${^*M}_if=\sup_{j\geq i}|\mathbb E_j(f)|\text{ and }
{^*\mathcal{M}_i}(f,g)=\sup_{j\geq i}|\mathbb E_j(f)||\mathbb E_j(g)|,$$
respectively.

Let $B\in \mathcal {F},$ $w\in \mathcal{L}^+,$
we always denote $\int_\Omega\chi_Bd\mu$ and
$\int_\Omega\chi_B\omega d\mu$ by $|B|$ and $|B|_\omega,$
respectively.

\subsection{bilinear weights}

In this subsection we define several kinds of bilinear weights.

\begin{definition}\label{revers}Let $\omega_1, ~\omega_2$ be weights and $1<p_1, ~p_2<\infty.$
Suppose that
$\frac{1}{p}=\frac{1}{p_1 }+\frac{1}{p_2 }.$
Denote that $\overrightarrow{p}=(p_1,p_2)$
and $\sigma_s=\omega_s^{-\frac{1}{p_s-1}}\in \mathcal{L}^+,~s=1,~2.$
We say that the couple of weights $(\omega_1, ~\omega_2)$
satisfies the reverse H\"{o}lder's condition $RH_{\overrightarrow{p}},$ if
there exists a positive constant $C$ such that for all $i\in \mathbb Z$ and $\tau\in \mathcal {T}_i,$ we have
\begin{equation}\label{RH}
\Big(\int_{\{\tau<+\infty\}}\sigma_1d\mu\Big)^{\frac{p}{p_1}}
\Big(\int_{\{\tau<+\infty\}}\sigma_2d\mu\Big)^{\frac{p}{p_2}}
\leq C\int_{\{\tau<+\infty\}}\sigma_1^{\frac{p}{p_1}}
\sigma_2^{\frac{p}{p_2}}d\mu.
\end{equation}
We denote by $[\omega_1, \omega_2]_{RH_{\overrightarrow{p}}}$
the smallest constant $C$ in \eqref{RH}.
\end{definition}

\begin{remark}
In the literature there exist many reverse H\"{o}lder's inequalities of the type
$$\|f\|_p\|g\|_q\leq C\|fg\|_1,$$
where $\frac{1}{p}+\frac{1}{q}=1,$ $C$ is a constant and the functions $f$ and $g$ are subjected to suitable
restrictions. The suitable restrictions can be found in \cite{Nehari,Zhuang}.
In our paper, we find that the reverse H\"{o}lder's condition is useful for bilinear weighted theory.
\end{remark}

\begin{definition}\label{definition Ap}Let $v,~\omega_1 \hbox{ and }\omega_2$ be weights and $1<p_1, ~p_2<\infty.$
Suppose that
$\frac{1}{p}=\frac{1}{p_1 }+\frac{1}{p_2 }.$
Denote that $\overrightarrow{p}=(p_1,p_2)$
and $\sigma_s=\omega_s^{-\frac{1}{p_s-1}}\in \mathcal{L}^+,~s=1,~2.$
We say that the triple of weights $(v,~\omega_1, ~\omega_2)$
satisfies the condition $A_{\overrightarrow{p}},$ if
there exists a positive constant $C$ such that
      \begin{equation}\label{bi-AP}
\sup\limits_{j\in \mathbb Z}\mathbb E_j(v)\mathbb E_j(\omega_1^{1-p'_1})^{\frac{p}{p'_1}}
      \mathbb E_j(\omega_2^{1-p'_2})^{\frac{p}{p'_2}}\leq C,
      \end{equation}
where $\frac{1}{p_s}+\frac{1}{p'_s}=1,~s=1,~2.$ We denote by $[v,\omega_1, \omega_2]_{A_{\overrightarrow{p}}}$
the smallest constant $C$ in \eqref{bi-AP}.
\end{definition}

\begin{definition}\label{definition Sp}Let $v,~\omega_1 \hbox{ and }\omega_2$ be weights and $1<p_1, ~p_2<\infty.$
Suppose that
$\frac{1}{p}=\frac{1}{p_1 }+\frac{1}{p_2 }.$
Denote that $\overrightarrow{p}=(p_1,p_2)$
and $\sigma_s=\omega_s^{-\frac{1}{p_s-1}}\in \mathcal{L}^+,~s=1,~2.$
We say that the triple of weights $(v,~\omega_1, ~\omega_2)$
satisfies the condition $S_{\overrightarrow{p}},$ if
      \begin{equation}\label{bi-SP}
[v,\overrightarrow{\omega}]_{S_{\overrightarrow{p}}}:=\sup\limits_{i\in \mathbb Z,\tau\in \mathcal {T}_i}
\Big(\frac{\int_{\{\tau<+\infty\}} {\mathcal{M}}(\sigma_1\chi_{\{\tau<+\infty\}},\sigma_2 \chi_{\{\tau<+\infty\}})^pvd\mu}{\sigma_1({\{\tau<+\infty\}})^{\frac{p}{p_1}}\sigma_2({\{\tau<+\infty\}})^{\frac{p}{p_2}}}\Big)^{\frac{1}{p}}<\infty,
      \end{equation}
where $\frac{1}{p_s}+\frac{1}{p'_s}=1,~s=1,~2.$
\end{definition}

\begin{definition}\label{definition Bp}Let $v,~\omega_1 \hbox{ and }\omega_2$ be weights and $1<p_1, ~p_2<\infty.$
Suppose that
$\frac{1}{p}=\frac{1}{p_1 }+\frac{1}{p_2 }.$
Denote that $\overrightarrow{p}=(p_1,p_2)$
and $\sigma_s=\omega_s^{-\frac{1}{p_s-1}}\in \mathcal{L}^+,~s=1,~2.$
We say that the couple of weights $(\omega_1, ~\omega_2)$
satisfies the condition $B_{\overrightarrow{p}},$ if
there exists a positive constant $C$ such that for all $i\in \mathbb Z$ we have
      \begin{equation}\label{bi-BP}\mathbb E_i(v)E_i(\sigma_1)^p
         \mathbb E_i(\sigma_2)^p\leq
C\exp\Big(\mathbb E_i(\log(\sigma^{\frac{p}{p_1}}_1\sigma^{\frac{p}{p_2}}_2))\Big).
      \end{equation}
We denote by $[v,\omega_1, \omega_2]_{B_{\overrightarrow{p}}}$
the smallest constant $C$ in \eqref{bi-BP}.
\end{definition}

\begin{definition}\label{definition Wp}Let $\omega_1 \hbox{ and }\omega_2$ be weights and $1<p_1, ~p_2<\infty.$
Suppose that
$\frac{1}{p}=\frac{1}{p_1 }+\frac{1}{p_2 }.$
Denote that $\overrightarrow{p}=(p_1,p_2)$
and $\sigma_s=\omega_s^{-\frac{1}{p_s-1}}\in \mathcal{L}^+,~s=1,~2.$
We say that the couple of weights $(\omega_1, ~\omega_2)$
satisfies the condition $W^{\infty}_{\overrightarrow{p}},$ if
there exists a positive constant $C$ such that for all $i\in \mathbb Z$ and $\tau\in \mathcal{T}_i$ we have
      \begin{equation}\label{bi-WP}\int_{\{\tau<+\infty\}}M(\sigma_1\chi_{\{\tau<+\infty\}})^{\frac{p}{p_1}}
         M(\sigma_2\chi_{\{\tau<+\infty\}})^{\frac{p}{p_2}}d\mu
\leq C\int_{\{\tau<+\infty\}} \sigma_1^{\frac{p}{p_1}}\sigma_2^{\frac{p}{p_2}}d \mu.
      \end{equation}
We denote by $[\omega_1, \omega_2]_{W^{\infty}_{\overrightarrow{p}}}$
the smallest constant $C$ in \eqref{bi-WP}.
\end{definition}

\begin{remark}
If $p_1=p_2$
and $\omega_1=\omega_2$ in the above definitions, we obtain the linear ones.
\end{remark}

\section{The construction of principal sets}\label{constru}
Let $i\in \mathbb Z,$ $h_1\in \mathcal{L}^+$ and $h_2\in \mathcal{L}^+.$
Fixing $k\in \mathbb Z,$ we define a stopping time$$
\tau:=\inf\{j\geq i:
~\mathbb E(h_1|\mathcal{F}_j)\mathbb E(h_2|\mathcal{F}_j)>4^{k+1}\}.$$
For $\Omega_0\in \mathcal{F}^0_i,$ we denote that \begin{equation}\label{P0}
P_0:=\{4^{k-1}< \mathbb E(h_1|\mathcal{F}_i)\mathbb E(h_2|\mathcal{F}_i)\leq4^k\}\cap\Omega_0,\end{equation}
and assume $\mu(P_0)>0.$
It follows that $P_0\in \mathcal{F}^0_i.$
We write $\mathcal{K}_1(P_0):=i$ and $\mathcal{K}_2(P_0):=k.$
We let $\mathcal{P}_1:=\{P_0\}$ which we call the first generation of principal sets.
To get the second generation of principal sets we define a stopping time
$$
\tau_{P_0}:=\tau\chi_{P_0}+\infty\chi_{P^c_0},$$
where $P^c_0=\Omega\setminus P_0.$
We say that a set $P\subset P_0$ is a principal set with respect to $P_0$
if it satisfies $\mu(P)>0$ and there exists $j>i$ and $l>k+1$ such that
\begin{eqnarray*}
P&=&\{4^{l-1}< \mathbb E(h_1|\mathcal{F}_j)
          \mathbb E(h_2|\mathcal{F}_j)\leq4^l\}\cap\{\tau_{P_0}=j\}\cap P_0\\
&=&\{4^{l-1}< \mathbb E(h_1|\mathcal{F}_j)
          \mathbb E(h_2|\mathcal{F}_j)\leq4^l\}\cap\{\tau=j\}\cap P_0.\end{eqnarray*}
Noticing that such $j$ and $l$ are unique, we write $\mathcal{K}_1(P):=j$ and $\mathcal{K}_2(P):=l.$
We let $\mathcal{P}(P_0)$ be the set of all principal sets with respect to $P_0$ and
let $\mathcal{P}_2:=\mathcal{P}(P_0)$ which we call the second generalization of principal sets.

We now need to verify that
$$
\mu(P_0)\leq2\mu\big(E(P_0)\big),
$$
where
$$
E(P_0):=P_0\cap\{\tau_{P_0}=\infty\}=P_0\cap\{\tau=\infty\}=P_0\backslash\bigcup\limits_{P\in \mathcal{P}(P_0)}P.
$$
Indeed, we have
\begin{eqnarray*}
& &\mu\big(P_0\cap\{\tau_{P_0}<\infty\}\big)\\
&\leq& (4^{-k-1})^\frac{1}{2}\int_{P_0\cap\{\tau_{P_0}<\infty\}}\mathbb E(h_1|\mathcal{F}_{\tau_{P_0}})^\frac{1}{2}
                 \mathbb E(h_2|\mathcal{F}_{\tau_{P_0}})^\frac{1}{2}d\mu\\
&=& (4^{-k-1})^\frac{1}{2}\int_{P_0}\mathbb E(h_1|\mathcal{F}_{\tau_{P_0}})^\frac{1}{2}
                 \mathbb E(h_2|\mathcal{F}_{\tau_{P_0}})^\frac{1}{2}\chi_{\{\tau_{P_0}<\infty\}}d\mu\\
&=& (4^{-k-1})^\frac{1}{2}\int_{P_0}\sum\limits_{j\geq i}\mathbb E(h_1|\mathcal{F}_{\tau_{P_0}})^\frac{1}{2}
                 \mathbb E(h_2|\mathcal{F}_{\tau_{P_0}})^\frac{1}{2}\chi_{\{\tau_{P_0}=j\}}d\mu\\
&=& (4^{-k-1})^\frac{1}{2}\int_{P_0}\sum\limits_{j\geq i}\mathbb E(h_1|\mathcal{F}_{j})^\frac{1}{2}
                 \mathbb E(h_2|\mathcal{F}_{j})^\frac{1}{2}\chi_{\{\tau_{P_0}=j\}}d\mu.\end{eqnarray*}
It follows from the H\"{o}lder's inequality for sum that
\begin{eqnarray*}
& &\mu\big(P_0\cap\{\tau_{P_0}<\infty\}\big)\\
&\leq& (4^{-k-1})^\frac{1}{2}\int_{P_0}\Big(\sum\limits_{j\geq i}\mathbb E(h_1\chi_{\{\tau_{P_0}=j\}}|\mathcal{F}_{j})\Big)^\frac{1}{2}
                 \Big(\sum\limits_{j\geq i}\mathbb E(h_2\chi_{\{\tau_{P_0}=j\}}|\mathcal{F}_{j})\Big)^\frac{1}{2}d\mu\\
&=& (4^{-k-1})^\frac{1}{2}\int_{P_0}\mathbb E_i\left(\Big(\sum\limits_{j\geq i}\mathbb E(h_1\chi_{\{\tau_{P_0}=j\}}|\mathcal{F}_{j})\Big)^\frac{1}{2}
                 \Big(\sum\limits_{j\geq i}\mathbb E(h_2\chi_{\{\tau_{P_0}=j\}}|\mathcal{F}_{j})\Big)^\frac{1}{2}\right)d\mu.\end{eqnarray*}
Applying the H\"{o}lder's inequality for conditional expectations, we have
\begin{eqnarray*}
& &\mu\big(P_0\cap\{\tau_{P_0}<\infty\}\big)\\
&\leq& (4^{-k-1})^\frac{1}{2}\int_{P_0}\Big(\sum\limits_{j\geq i}\mathbb E_i(\mathbb E(h_1\chi_{\{\tau_{P_0}=j\}}|\mathcal{F}_{j}))\Big)^\frac{1}{2}
\Big(\sum\limits_{j\geq i}\mathbb E_i(\mathbb E(h_2\chi_{\{\tau_{P_0}=j\}}|\mathcal{F}_{j}))\Big)^\frac{1}{2}d\mu\\
&=& (4^{-k-1})^\frac{1}{2}\int_{P_0}\Big(\sum\limits_{j\geq i}\mathbb E_i(h_1\chi_{\{\tau_{P_0}=j\}})\Big)^\frac{1}{2}
                \Big(\sum\limits_{j\geq i}\mathbb E_i(h_2\chi_{\{\tau_{P_0}=j\}})\Big)^\frac{1}{2}d\mu\\
&=& (4^{-k-1})^\frac{1}{2}\int_{P_0}\mathbb E_i(h_1\chi_{\{\tau_{P_0}<\infty\}})^\frac{1}{2}
                \mathbb E_i(h_2\chi_{\{\tau_{P_0}<\infty\}})^\frac{1}{2}d\mu\\
&\leq& (4^{-k-1})^\frac{1}{2}\int_{P_0}\mathbb E_i(h_1)^\frac{1}{2}
                \mathbb E_i(h_2)^\frac{1}{2}d\mu
\leq4^{-\frac{1}{2}}\mu(P_0)= \frac{1}{2}\mu(P_0).
\end{eqnarray*}
This clearly implies $$
\mu(P_0)\leq2\mu\big(E(P_0)\big).
$$

For any $P'_0\in (P_0\cap\mathcal{F}^0_i),$ there exists a set $\Omega''_0\in \mathcal{F}^0_i$ such that
$$P'_0=P_0\cap\Omega''_0=\{4^{k-1}< \mathbb E(h_1|\mathcal{F}_i)\mathbb E(h_2|\mathcal{F}_i)\leq4^k\}\cap\Omega_0\cap\Omega''_0.$$
Taking $\Omega'_0=\Omega_0\cap\Omega''_0,$ we have $P'_0=\{4^{k-1}< \mathbb E(h_1|\mathcal{F}_i)\mathbb E(h_2|\mathcal{F}_i)\leq4^k\}\cap\Omega'_0.$
Using $\Omega'_0$
instead of $\Omega_0$ in \eqref{P0}, we deduce that $$\mu(P'_0)\leq2\mu\big(E(P'_0)\big).$$
Moreover, we obtain that
\begin{eqnarray*}
\int_{P'_0}\chi_{P_0}d\mu&=&\mu(P'_0\cap P_0)=\mu(P'_0)
\leq2\mu\big(E(P'_0)\big)=2\mu\big(P'_0\cap\{\tau=\infty\}\big)\\
&=&2\mu\big(P'_0\cap P_0\cap\{\tau=\infty\}\big)
=2\int_{P'_0}\chi_{E(P_0)}d\mu\\
&=&2\int_{P'_0}\mathbb E_i(\chi_{E(P_0)})d\mu.
\end{eqnarray*}
Since $P'_0$ is arbitrary, we have $\chi_{P_0}\leq 2\mathbb E_i(\chi_{E(P_0)})\chi_{P_0}.$

The next generalizations are defined inductively,
$$
\mathcal{P}_{n+1}:=\bigcup\limits_{P\in \mathcal{P}_{n}}\mathcal{P}(P),
$$
and we define the collection of principal sets $\mathcal{P}$ by
$$
\mathcal{P}:=\bigcup\limits_{n=1}^{\infty}\mathcal{P}_{n}.
$$
It is easy to see that the collection of principal sets $\mathcal{P}$ satisfied the following properties:
\begin{itemize}
  \item [(P. 1)] The set $E(P)$ where $P\in \mathcal{P},$ are disjoint and $P_0=\bigcup\limits_{P\in \mathcal{P}}E(P);$
  \item [(P. 2)] $P\in\mathcal{F}_{{\mathcal{K}}_1(P)};$
  \item [(P. 3)] \label{new}$\chi_{P}\leq 2\mathbb E(\chi_{E(P)}|\mathcal{F}_{{\mathcal{K}}_1(P)})\chi_{P}$;
  \item [(P. 4)] $4^{{\mathcal{K}}_2(P)-1}<\mathbb E(h_1|\mathcal{F}_{{\mathcal{K}}_1(P)})
                \mathbb E(h_2|\mathcal{F}_{{\mathcal{K}}_1(P)})\leq4^{{\mathcal{K}}_2(P)}$ on $P;$
  \item [(P. 5)] $\sup\limits_{j\geq i}\mathbb E_j(h_1\chi_P)
                 \mathbb E_j(h_2\chi_P)\leq4^{{\mathcal{K}}_2(P)+1}$ on $E(P).$
\end{itemize}

We call (P.\ref{new}) the conditional sparsity of principal sets.
Then we use the principal sets to represent the tailed bilinear maximal operator and obtain the following Lemma $\ref{repre}.$

\begin{lemma}\label{repre}Let $i\in \mathbb{Z},$ $h_1\in \mathcal{L}^+$ and $h_2\in \mathcal{L}^+.$
Fixing $k\in \mathbb{Z}$ and $\Omega_0\in \mathcal{F}^0_i,$ we denote $$
P_0:=\{4^{k-1}< \mathbb E(h_1|\mathcal{F}_i)\mathbb E(h_2|\mathcal{F}_i)\leq4^k\}\cap\Omega_0.$$
If $\mu(P_0)>0,$ then
\begin{eqnarray*}
{^*\mathcal{M}_i}(h_1,h_2)\chi_{P_0}
&=&{^*\mathcal{M}_i}(h_1\chi_{P_0},h_2\chi_{P_0})\chi_{P_0}\\
&=&\sum\limits_{P\in \mathcal{P}}{^*\mathcal{M}_i}(h_1\chi_{P_0},h_2\chi_{P_0})\chi_{E(P)}\\
&\leq&16\sum\limits_{P\in \mathcal{P}}4^{({\mathcal{K}}_2(P)-1)}\chi_{E(P)}.
\end{eqnarray*}
\end{lemma}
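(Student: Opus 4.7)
The plan is to verify the three parts of the stated identity-and-inequality chain in turn, using the measurability of $P_0$ at level $i$ together with the inductive construction of the principal set collection $\mathcal{P}$.

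The first equality follows because $P_0\in\mathcal{F}_i\subset\mathcal{F}_j$ for every $j\geq i$: one can pull $\chi_{P_0}$ inside each conditional expectation and, using $\chi_{P_0}^2=\chi_{P_0}$, obtain the pointwise identity $\chi_{P_0}\mathbb{E}_j(h_1)\mathbb{E}_j(h_2)=\mathbb{E}_j(h_1\chi_{P_0})\mathbb{E}_j(h_2\chi_{P_0})$; taking the supremum over $j\geq i$ (which commutes with the nonnegative factor $\chi_{P_0}$) yields the first equality. The second equality is immediate from property (P.\,1), which asserts that $P_0=\bigsqcup_{P\in\mathcal{P}}E(P)$ is a disjoint decomposition, so $\chi_{P_0}=\sum_{P\in\mathcal{P}}\chi_{E(P)}$.

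For the last inequality, it suffices to show the pointwise bound ${^*\mathcal{M}_i}(h_1\chi_{P_0},h_2\chi_{P_0})\leq 4^{\mathcal{K}_2(P)+1}=16\cdot 4^{\mathcal{K}_2(P)-1}$ on each $E(P)$. By the first equality, on $E(P)\subset P_0$ this quantity equals $\sup_{j\geq i}\mathbb{E}_j(h_1)\mathbb{E}_j(h_2)$, so I would split the supremum at the index $\mathcal{K}_1(P)$. For $j\geq\mathcal{K}_1(P)$, the stopping time $\tau_P$ used to construct the children of $P$ satisfies $E(P)=P\cap\{\tau_P=\infty\}$, so $j<\tau_P$ on $E(P)$, and by definition of $\tau_P$ one has $\mathbb{E}_j(h_1)\mathbb{E}_j(h_2)\leq 4^{\mathcal{K}_2(P)+1}$ on $E(P)$. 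For $i\leq j<\mathcal{K}_1(P)$, I would trace back the chain of ancestors $P_0=P^{(1)}\supset P^{(2)}\supset\cdots\supset P^{(n)}=P$ produced by the iterative construction; then $j$ belongs to a unique interval $[\mathcal{K}_1(P^{(m)}),\mathcal{K}_1(P^{(m+1)}))$ for some $m<n$, and since $E(P)\subset P^{(m+1)}\subset P^{(m)}$ and $\tau_{P^{(m)}}=\mathcal{K}_1(P^{(m+1)})$ on $P^{(m+1)}$, the defining property of $\tau_{P^{(m)}}$ yields $\mathbb{E}_j(h_1)\mathbb{E}_j(h_2)\leq 4^{\mathcal{K}_2(P^{(m)})+1}$ on $E(P)$. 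Because the construction enforces $\mathcal{K}_2(P^{(m+1)})\geq\mathcal{K}_2(P^{(m)})+2$ (since the new level $l$ satisfies $l>k+1$), we have $\mathcal{K}_2(P^{(m)})\leq\mathcal{K}_2(P)-2$, so this bound is dominated by $4^{\mathcal{K}_2(P)-1}\leq 4^{\mathcal{K}_2(P)+1}$. Multiplying by $\chi_{E(P)}$ and summing over $P\in\mathcal{P}$ produces the claimed inequality.

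The main obstacle is the second range in the last step: one must realise that the principal-set tree partitions the time axis $\{j\geq i\}$ on $E(P)$ into finitely many intervals governed by the stopping times of the successive ancestors, and then exploit the strict growth $\mathcal{K}_2(P^{(m+1)})\geq\mathcal{K}_2(P^{(m)})+2$ to collapse every such interval-bound into the single controlling constant $4^{\mathcal{K}_2(P)+1}$.
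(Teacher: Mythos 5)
Your proposal is correct and follows the argument the paper leaves implicit: localization via $P_0\in\mathcal{F}_i$ for the first equality, the disjoint decomposition (P.\,1) for the second, and the stopping-time construction for the final bound. Your ancestor-tracing step for the range $i\le j<\mathcal{K}_1(P)$, together with the observation that $\mathcal{K}_2$ increases by at least $2$ along the chain, is exactly what is needed to upgrade property (P.\,5) (stated with $h_s\chi_P$) to the bound for ${^*\mathcal{M}_i}(h_1\chi_{P_0},h_2\chi_{P_0})$ on $E(P)$.
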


\section{Carleson embedding theorem associated with the collection of principal sets}\label{Car}

For $\omega_1\in \mathcal{L}^+$ and $\omega_2\in \mathcal{L}^+,$ we set $\sigma_1:=\omega_1^{-\frac{1}{p_1-1}}\in \mathcal{L}^+$ and $\sigma_2=:\omega_2^{-\frac{1}{p_2-1}}\in \mathcal{L}^+.$ Suppose that $f^{p_1}_1\omega_1\in L_1^+$ and $f^{p_2}_2\omega_2\in L_1^+.$
It follows from H\"{o}lder's inequality that $f_1\in \mathcal{L}^+$ and $f_2\in \mathcal{L}^+.$ Let $h_1=f_1$ and $h_2=f_2.$
Fixing $k\in \mathbb Z,$ $i\in \mathbb Z$ and $\Omega_0\in \mathcal{F}^0_i$ such that $
\mu(\{4^{k-1}< \mathbb E(h_1|\mathcal{F}_i)\mathbb E(h_2|\mathcal{F}_i)\leq4^k\}\cap\Omega_0)>0,$ we apply the construction of principal sets to give the following Carleson embedding theorem.

\begin{theorem}\label{Carleson_thm}For $P\in \mathcal{P}$ and $l\in \mathbb Z,$ let \begin{equation}\label{con1}A_P^l:=P\cap\{2^l<\mathbb E(\sigma_1|\mathcal{F}_{\mathcal{K}_1(P)})
\mathbb E(\sigma_2|\mathcal{F}_{\mathcal{K}_1(P)})\leq2^{l+1}\}.\end{equation} We denote
$\mathcal{Q}=:\bigcup\limits_{P\in \mathcal{P}}\bigcup\limits_{l\in \mathbb Z}A_P^l.$
If the nonnegative numbers $a_Q$ and non-negative
function $\sigma_1^{\frac{p}{p_1}}\sigma_2^{\frac{p}{p_2}}$
satisfy
\begin{equation}\label{lem_Carleson_ass}
\sum\limits_{Q\subseteq \{\tau<+\infty\}}a_Q\leq A \int_{\{\tau<+\infty\}} \sigma_1^{\frac{p}{p_1}}\sigma_2^{\frac{p}{p_2}}d \mu,
~\forall Q\in \mathcal{Q},~\tau\in \mathcal{T}_i,\end{equation}
where $A$ is an absolute constant,
then
\begin{multline*}
\sum\limits_{A_P^l\in \mathcal{Q}}\mathop{\hbox{essinf}}\limits_{A_P^l}
   \Big(\mathbb E^{\sigma_1}(h_1\sigma^{-1}_1|\mathcal{F}_{\mathcal{K}_1(P)})\mathbb E^{\sigma_2}(h_2\sigma^{-1}_2|\mathcal{F}_{\mathcal{K}_1(P)})\Big)^pa_{A_P^l}\\
\leq A(p'_1p'_2)^p\Big(\int_{P_0}h_1^{p_1}\omega_1 d\mu\Big)^{\frac{p}{p_1}}\Big(\int_{P_0}h_2^{p_2}\omega_2 d\mu\Big)^{\frac{p}{p_2}},
\end{multline*}
where $\mathbb E^{\sigma_s}(\cdot|\mathcal{F}_{\mathcal{K}_1(P)})$ is the conditional expectation with respect to $\mathcal{F}_{\mathcal{K}_1(P)},$
$\sigma_s d\mu$ in place of $d\mu,$ $s=1,~2.$
\end{theorem}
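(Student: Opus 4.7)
The plan is to reduce the estimate to the Carleson hypothesis by a layer cake decomposition and to close via the weighted Doob maximal inequality. First, write $f_s:=h_s\sigma_s^{-1}$ for $s=1,2$, so that the inner conditional expectation becomes $\phi_s:=\mathbb E^{\sigma_s}(f_s|\mathcal{F}_{\mathcal{K}_1(P)})$ and, since $\sigma_s^{1-p_s}=\omega_s$, one has $\int_{P_0}f_s^{p_s}\sigma_s\,d\mu=\int_{P_0}h_s^{p_s}\omega_s\,d\mu$. Denoting $m(A_P^l):=\mathop{\hbox{essinf}}\limits_{A_P^l}(\phi_1\phi_2)$, the layer cake formula gives
\begin{equation*}
\sum_{A_P^l\in\mathcal{Q}} m(A_P^l)^p\,a_{A_P^l}=p\int_0^\infty\lambda^{p-1}\Big(\sum_{A_P^l:\,m(A_P^l)>\lambda}a_{A_P^l}\Big)d\lambda.
\end{equation*}

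Next, for each $\lambda>0$ I would introduce the stopping time
\begin{equation*}
\tau_\lambda:=\inf\{j\geq i:\mathbb E^{\sigma_1}(f_1\chi_{P_0}|\mathcal{F}_j)\mathbb E^{\sigma_2}(f_2\chi_{P_0}|\mathcal{F}_j)>\lambda\}\in\mathcal{T}_i.
\end{equation*}
Because $\mathcal{K}_1(P)\geq i$ for every $P\in\mathcal{P}$ and $P_0\in\mathcal{F}_i\subseteq\mathcal{F}_{\mathcal{K}_1(P)}$, the pull-out property of conditional expectation gives $\mathbb E^{\sigma_s}(f_s\chi_{P_0}|\mathcal{F}_{\mathcal{K}_1(P)})=\chi_{P_0}\phi_s$, which agrees with $\phi_s$ on $A_P^l\subseteq P_0$. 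Hence $m(A_P^l)>\lambda$ forces $A_P^l\subseteq\{\tau_\lambda<+\infty\}$ a.e., and applying the hypothesis \eqref{lem_Carleson_ass} with $\tau=\tau_\lambda$, followed by a reverse layer cake, yields
\begin{equation*}
\sum_{A_P^l\in\mathcal{Q}}m(A_P^l)^p a_{A_P^l}\leq A\int_{\Omega}\bigl({}^*\mathcal{M}_i^{\sigma_1,\sigma_2}(f_1\chi_{P_0},f_2\chi_{P_0})\bigr)^p\sigma_1^{p/p_1}\sigma_2^{p/p_2}\,d\mu,
\end{equation*}
where ${}^*\mathcal{M}_i^{\sigma_1,\sigma_2}$ denotes the bilinear tailed maximal operator built from the weighted expectations $\mathbb E^{\sigma_s}(\cdot|\mathcal{F}_j)$.

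To close, I would bound the integrand pointwise by $\bigl({}^*M_i^{\sigma_1}(f_1\chi_{P_0})\bigr)^p\bigl({}^*M_i^{\sigma_2}(f_2\chi_{P_0})\bigr)^p$ (via $\sup_j A_jB_j\leq\sup_j A_j\cdot\sup_j B_j$), split the resulting integral using H\"older's inequality with conjugate exponents $p_1/p$ and $p_2/p$ (conjugacy coming from $p/p_1+p/p_2=1$), and apply the weighted Doob maximal inequality $\|{}^*M^{\sigma_s}g\|_{L^{p_s}(\sigma_s d\mu)}\leq p_s'\|g\|_{L^{p_s}(\sigma_s d\mu)}$ on each factor. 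This produces the constant $(p_1'p_2')^p$ together with $\bigl(\int_{P_0}f_1^{p_1}\sigma_1\,d\mu\bigr)^{p/p_1}\bigl(\int_{P_0}f_2^{p_2}\sigma_2\,d\mu\bigr)^{p/p_2}$, which is exactly the target bound after converting back to $h_s,\omega_s$.

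The main subtlety I expect is the uniform pull-out identity $\mathbb E^{\sigma_s}(f_s\chi_{P_0}|\mathcal{F}_{\mathcal{K}_1(P)})=\chi_{P_0}\phi_s$ across $P\in\mathcal{P}$, which is what makes the weighted conditional expectations inside the statement compatible with the (unweighted-measure) Carleson hypothesis. The accompanying care point is that the cutoff $\chi_{P_0}$ must be inserted already in the definition of $\tau_\lambda$: without it, the weighted Doob step would produce $\|f_s\|_{L^{p_s}(\sigma_s)}$ over all of $\Omega$ rather than over $P_0$, which is stronger than what the theorem claims.
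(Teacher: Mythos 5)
Your proposal is correct and follows essentially the same route as the paper: layer-cake decomposition of the sum over $\mathcal{Q}$, the level sets absorbed into $\{\tau_\lambda<+\infty\}$ for the stopping time built from the $\sigma_s$-weighted conditional expectations cut off by $\chi_{P_0}$, the Carleson hypothesis applied at each level, and then H\"older with exponents $p_1/p,\,p_2/p$ plus the weighted Doob inequality to produce $(p_1'p_2')^p$. The pull-out identity you flag as the main subtlety is exactly the point the paper uses (implicitly) to pass from the essential infima to the sublevel sets of the tailed weighted maximal operator.
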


\begin{proof} We view the sum $$\sum\limits_{A_P^l\in \mathcal{Q}}\mathop{\hbox{essinf}}\limits_{A_P^l}
   \Big(\mathbb E^{\sigma_1}(h_1\sigma^{-1}_1|\mathcal{F}_{\mathcal{K}_1(P)})\mathbb E^{\sigma_2}
   (h_2\sigma^{-1}_{2}|\mathcal{F}_{\mathcal{K}_1(P)})\Big)^pa_{A_P^l}$$
as an integral on a measure space $(\mathcal{Q},~2^\mathcal{Q},~\nu)$ built over
$\mathcal{Q},$ assigning to each $Q\in\mathcal{Q}$ the measure $a_Q.$ Thus
$$\sum\limits_{A_P^l\in \mathcal{Q}}\mathop{\hbox{essinf}}\limits_{A_P^l}
   \Big(\mathbb E^{\sigma_1}(h_1\sigma^{-1}_1|\mathcal{F}_{\mathcal{K}_1(P)})\mathbb E^{\sigma_2}(h_2\sigma^{-1}_2|\mathcal{F}_{\mathcal{K}_1(P)})\Big)^pa_{A_P^l}\\
=\int_0^\infty p\lambda^{p-1}\nu\big(D_\lambda)d\lambda,$$
where $\mathcal{D}_\lambda=\big\{A_P^l\in \mathcal{Q}:~\mathop{\hbox{essinf}}\limits_{A_P^l}
   \big(\mathbb E^{\sigma_1}(h_1\sigma^{-1}_1|\mathcal{F}_{\mathcal{K}_1(P)})\mathbb E^{\sigma_2}(h_2\sigma^{-1}_2|\mathcal{F}_{\mathcal{K}_1(P)})\big)>\lambda\big\}.$
Let $$\tau=\inf\Big\{n\geq i:~\mathbb E^{\sigma_1}(h_1\sigma^{-1}_1|\mathcal{F}_n)\mathbb E^{\sigma_2}(h_2\sigma^{-1}_2|\mathcal{F}_n)\chi_{P_0}>\lambda\Big\}.$$
Then $\tau\in \mathcal{T}_i$ and $\bigcup\limits_{Q\in \mathcal{D}_\lambda}Q\subset\{{^*\mathcal{M}}_i^{\sigma_1,\sigma_2}(h_1\sigma^{-1}_1\chi_{P_0},h_2\sigma^{-1}_2\chi_{P_0})>\lambda\}=\{\tau< +\infty\},$
where $${^*\mathcal{M}}_i^{\sigma_1,\sigma_2}(h_1\sigma^{-1}_1\chi_{P_0},h_2\sigma^{-1}_2\chi_{P_0}):=\sup\limits_{j\geq i}\mathbb E^{\sigma_1}(h_1\sigma^{-1}_1\chi_{P_0}|\mathcal{F}_j)\mathbb E^{\sigma_2}(h_2\sigma^{-1}_2\chi_{P_0}|\mathcal{F}_j).$$
Thus
\begin{eqnarray*}
\nu(\mathcal{D}_\lambda)=\sum\limits_{Q\in \mathcal{D}_\lambda}a_Q
&\leq& A\int_{\{{^*\mathcal{M}}_i^{\sigma_1,\sigma_2}(h_1\sigma^{-1}_1\chi_{P_0},\,h_2\sigma^{-1}_2\chi_{P_0})>\lambda\}}
       \sigma_1^{\frac{p}{p_1}}\sigma_2^{\frac{p}{p_2}}d \mu\\
&\leq&A\int_{\{\mathcal{M}^{\sigma_1,\sigma_2}(h_1\sigma^{-1}_1\chi_{P_0},\,h_2\sigma^{-1}_2\chi_{P_0})>\lambda\}}
       \sigma_1^{\frac{p}{p_1}}\sigma_2^{\frac{p}{p_2}}d \mu,
\end{eqnarray*}
which implies that
\begin{multline}\label{2}
\sum\limits_{A_P^l\in \mathcal{Q}}\mathop{\hbox{essinf}}\limits_{A_P^l}
   \Big(\mathbb E^{\sigma_1}(h_1\sigma^{-1}_1|\mathcal{F}_{\mathcal{K}_1(P)})\mathbb E^{\sigma_2}(h_2\sigma^{-1}_2|\mathcal{F}_{\mathcal{K}_1(P)})\Big)^pa_{A_P^l}\\
\leq A\int_0^\infty p\lambda^{p-1}\int_{\{\mathcal{M}^{\sigma_1,\sigma_2}(h_1\sigma^{-1}_1\chi_{P_0},\,h_2\sigma^{-1}_2\chi_{P_0})>\lambda\}}
       \sigma_1^{\frac{p}{p_1}}\sigma_2^{\frac{p}{p_2}}d \mu,
\end{multline}
where $${\mathcal{M}}^{\sigma_1,\sigma_2}(h_1\sigma^{-1}_1\chi_{P_0},h_2\sigma^{-1}_2\chi_{P_0}):=\sup\limits_{j\in \mathbb{Z}}\mathbb E^{\sigma_1}(h_1\sigma^{-1}_1\chi_{P_0}|\mathcal{F}_j)\mathbb E^{\sigma_2}(h_2\sigma^{-1}_2\chi_{P_0}|\mathcal{F}_j).$$
It follows from Fubini's theorem that
\begin{multline}\label{3}
\int_0^\infty p\lambda^{p-1}\int_{\{\mathcal{M}^{\sigma_1,\sigma_2}(h_1\sigma^{-1}_1\chi_{P_0},h_2\sigma^{-1}_2\chi_{P_0})>\lambda\}}
       \sigma_1^{\frac{p}{p_1}}\sigma_2^{\frac{p}{p_2}}d \mu\\
=\int_{\Omega}\mathcal{M}^{\sigma_1,\sigma_2}(h_1\sigma^{-1}_1\chi_{P_0},h_2\sigma^{-1}_2\chi_{P_0})^p \sigma_1^{\frac{p}{p_1}}\sigma_2^{\frac{p}{p_2}}d\mu.
\end{multline}
Applying H\"{o}lder's inequality and Doob's maximal inequality, we obtain that
\begin{eqnarray*}
&&\int_{\Omega}\mathcal{M}^{\sigma_1,\sigma_2}(h_1\sigma^{-1}_1\chi_{P_0},h_2\sigma^{-1}_2\chi_{P_0})^p \sigma_1^{\frac{p}{p_1}}\sigma_2^{\frac{p}{p_2}}d\mu
\\&\leq&\int_{\Omega}\Big(\big(M^{\sigma_1}(h_1\sigma^{-1}_1\chi_{P_0})\big)^{p_1}\sigma_1\Big)^{\frac{p}{p_1}}
       \Big(\big(M^{\sigma_2}(h_2\sigma^{-1}_2\chi_{P_0})\big)^{p_2}\sigma_2\Big)^{\frac{p}{p_2}} d\mu\\
&\leq&\Big(\int_{\Omega}\big(M^{\sigma_1}(h_1\sigma^{-1}_1\chi_{P_0})\big)^{p_1}\sigma_1d\mu\Big)^{\frac{p}{p_1}}
       \Big(\int_{\Omega}\big(M^{\sigma_2}(h_2\sigma^{-1}_2\chi_{P_0})\big)^{p_2}\sigma_2d\mu\Big)^{\frac{p}{p_2}} \\
&\leq&(p'_1p'_2)^p\Big(\int_{P_0}h^{p_1}_1\sigma^{1-p_1}_1 d\mu\Big)^{\frac{p}{p_1}}\Big(\int_{P_0}h^{p_1}_2\sigma^{1-p_2}_2 d\mu\Big)^{\frac{p}{p_2}}\\
&=&(p'_1p'_2)^p\Big(\int_{P_0}h_1^{p_1}\omega_1 d\mu\Big)^{\frac{p}{p_1}}\Big(\int_{P_0}h_2^{p_2}\omega_2 d\mu\Big)^{\frac{p}{p_2}},
\end{eqnarray*}
where $$M^{\sigma_s}(h_s\sigma^{-1}_s\chi_{P_0}):=\sup\limits_{j\in \mathbb{Z}}\mathbb E^{\sigma_s}(h_s\sigma^{-1}_s\chi_{P_0}|\mathcal{F}_j),~s=1,2.$$
Combining \eqref{2}, \eqref{3} and the inequalities above, we conclude this proof.
\end{proof}

\begin{remark} \label{re_ca} Using $A_P^l:=E(P)\cap\{2^l<\mathbb E(\sigma_1|\mathcal{F}_{\mathcal{K}_1(P)})
\mathbb E(\sigma_2|\mathcal{F}_{\mathcal{K}_1(P)})\leq2^{l+1}\}$ instead of $\eqref{con1},$ we still have Theorem \ref{lem_Carleson_ass}.
\end{remark}

\section{Main results and their proofs }\label{proofs}

\subsection{Bilinear Version of one-weight Inequalities}

\begin{proof}[\bf Proof of Theorem \ref{thm_AP}.] \eqref{re1} Let $i\in \mathbb Z$ be arbitrarily chosen and fixed.
For $k\in \mathbb Z$ and $\Omega_0\in \mathcal{F}^0_i,$ we denote $$
P_0=\{4^{k-1}< \mathbb E(f_1\sigma_1|\mathcal{F}_i)\mathbb E(f_2\sigma_2|\mathcal{F}_i)\leq4^k\}\cap\Omega_0.$$
We claim that
\begin{multline}\label{6}
\Big(\int_{P_0}{^*\mathcal{M}_i}
(f_1\sigma_1\chi_{P_0},f_2\sigma_2\chi_{P_0})^{p}vd\mu\Big)^{\frac{1}{p}}\\ \leq
16\cdot4^{(q'-1)}p'_1p'_2[v,\omega_1,\omega_2]^{\frac{q'}{p}}_{A_{\overrightarrow P}}\Big(\int_{P_0} f_1^{p_1}\sigma_1d\mu\Big)^{\frac{1}{p_1}}\Big(\int_{P_0} f_2^{p_2}\sigma_2d\mu\Big)^{\frac{1}{p_2}},
\end{multline}
 where $q=\min\{p_1,p_2\}.$
To see this, denote $h_1=f_1\sigma_1\chi_{P_0}$ and $h_2=f_2\sigma_2\chi_{P_0}.$
For the above $i,$ $P_0,$ $h_1$ and $h_2,$ we apply the construction of principal sets. It follows from Lemma $\ref{repre}$ that
\begin{equation}\label{5}
\int_{P_0}{^*\mathcal{M}_i}
(f_1\sigma_1,f_2\sigma_2)^{p}vd\mu
\leq16^p\sum\limits_{P\in \mathcal{P}}\int_{E(P)}4^{p({\mathcal{K}}_2(P)-1)}vd\mu.
\end{equation}
Without loss of generality assume that $1< p_1\leq p_2<\infty.$  We now estimate $\int_{E(P)}vd\mu$ as follows:

\begin{eqnarray*}
\int_{E(P)}vd\mu&\leq&\int_{P}vd\mu=\int_{P}\mathbb E(v|\mathcal{F}_{{\mathcal{K}}_1(P)})d\mu\\
&=&\int_{P}\mathbb E(v|\mathcal{F}_{{\mathcal{K}}_1(P)})^{p'_1}\mathbb E(v|\mathcal{F}_{{\mathcal{K}}_1(P)})^{1-p'_1}
\mathbb E(\sigma_1|\mathcal{F}_{{\mathcal{K}}_1(P)})^{\frac{pp'_1}{p'_1}}\\
&&\times\mathbb E(\sigma_2|\mathcal{F}_{{\mathcal{K}}_1(P)})^{\frac{pp'_1}{p'_2}} \mathbb E(\sigma_1|\mathcal{F}_{{\mathcal{K}}_1(P)})^{-\frac{pp'_1}{p'_1}}\mathbb E(\sigma_2|\mathcal{F}_{{\mathcal{K}}_1(P)})^{-\frac{pp'_1}{p'_2}}d\mu\\
&=&\int_{P}\mathbb E(v|\mathcal{F}_{{\mathcal{K}}_1(P)})^{p'_1}
\mathbb E(\sigma_1|\mathcal{F}_{{\mathcal{K}}_1(P)})^{\frac{pp'_1}{p'_1}}\mathbb E(\sigma_2|\mathcal{F}_{{\mathcal{K}}_1(P)})^{\frac{pp'_1}{p'_2}}\\
&&\times \mathbb E(v|\mathcal{F}_{{\mathcal{K}}_1(P)})^{1-p'_1}
\mathbb E(\sigma_1|\mathcal{F}_{{\mathcal{K}}_1(P)})^{-\frac{pp'_1}{p'_1}}\mathbb E(\sigma_2|\mathcal{F}_{{\mathcal{K}}_1(P)})^{-\frac{pp'_1}{p'_2}}d\mu.
\end{eqnarray*}
It follows from the definition of $A_{\overrightarrow P}$ and the conditional sparsity of principal sets that
\begin{eqnarray*}
&&\int_{E(P)}vd\mu\\
&\leq&[v,\omega_1,\omega_2]^{p_1'}_{A_{\overrightarrow P}}\int_{P}\mathbb E(v|\mathcal{F}_{{\mathcal{K}}_1(P)})^{1-p'_1}
\mathbb E(\sigma_1|\mathcal{F}_{{\mathcal{K}}_1(P)})^{-\frac{pp'_1}{p'_1}}\mathbb E(\sigma_2|\mathcal{F}_{{\mathcal{K}}_1(P)})^{-\frac{pp'_1}{p'_2}}d\mu\\
&\leq&2^{2p(p'_1-1)}[v,\omega_1,\omega_2]^{p_1'}_{A_{\overrightarrow P}}\int_{P}\mathbb E(v|\mathcal{F}_{{\mathcal{K}}_1(P)})^{1-p'_1}
    \mathbb E(\sigma_1|\mathcal{F}_{{\mathcal{K}}_1(P)})^{-\frac{pp'_1}{p'_1}}\\
&&\times  \mathbb E(\sigma_2|\mathcal{F}_{{\mathcal{K}}_1(P)})^{-\frac{pp'_1}{p'_2}}
    \mathbb E(\chi_{E(P)}|\mathcal{F}_{{\mathcal{K}}_1(P)})^{2p(p'_1-1)}d\mu\\
&=&2^{2p(p'_1-1)}[v,\omega_1,\omega_2]^{p_1'}_{A_{\overrightarrow P}}\int_{P}\mathbb E(v|\mathcal{F}_{{\mathcal{K}}_1(P)})^{1-p'_1}
    \mathbb E(\sigma_1|\mathcal{F}_{{\mathcal{K}}_1(P)})^{-\frac{pp'_1}{p'_1}}\\
&&\times \mathbb E(\sigma_2|\mathcal{F}_{{\mathcal{K}}_1(P)})^{-\frac{pp'_1}{p'_2}} \mathbb E(\chi_{E(P)}v^{\frac{1}{2p}}\sigma_1^{\frac{1}{2p'_1}}\sigma_2^{\frac{1}{2p'_2}}|\mathcal{F}_{{\mathcal{K}}_1(P)})^{2p(p'_1-1)}d\mu.
\end{eqnarray*}
Applying H\"{o}lder's inequality for the conditional expectation, we have
\begin{eqnarray*} \int_{E(P)}vd\mu
&\leq&2^{2p(p'_1-1)}[v,\omega_1,\omega_2]^{p_1'}_{A_{\overrightarrow P}}\int_{P}\mathbb E(v|\mathcal{F}_{{\mathcal{K}}_1(P)})^{1-p'_1}
\mathbb E(\sigma_1|\mathcal{F}_{{\mathcal{K}}_1(P)})^{-\frac{pp'_1}{p'_1}}\\
&&\times\mathbb E(\sigma_2|\mathcal{F}_{{\mathcal{K}}_1(P)})^{-\frac{pp'_1}{p'_2}}  \Big(\mathbb E(v \chi_{E(P)}|\mathcal{F}_{{\mathcal{K}}_1(P)})^{\frac{1}{2p}}
\mathbb E(\sigma_1\chi_{E(P)}|\mathcal{F}_{{\mathcal{K}}_1(P)})^{\frac{1}{2p'_1}}\\
&&\times
\mathbb E(\sigma_2\chi_{E(P)}|\mathcal{F}_{{\mathcal{K}}_1(P)})^{\frac{1}{2p'_2}}\Big)^{2p(p'_1-1)}d\mu.\end{eqnarray*}
It follows from $p'_1\geq p'_2$ that $\frac{1}{2p'_s}2p(p'_1-1)-\frac{p}{p_s}=\frac{pp'_1}{p'_s}-p\geq 0,~s=1,~2.$ Then
\begin{eqnarray*}
\mathbb E(\sigma_s\chi_{E(P)}|\mathcal{F}_{{\mathcal{K}}_1(P)})^{\frac{1}{2p'_s}2p(p'_1-1)-\frac{p}{p_s}}
&=&\mathbb E(\sigma_s\chi_{E(P)}|\mathcal{F}_{{\mathcal{K}}_1(P)})^{\frac{pp'_1}{p'_s}-p}\\
&\leq& \mathbb E(\sigma_s|\mathcal{F}_{{\mathcal{K}}_1(P)})^{\frac{pp'_1}{p'_s}-p},~s=1,~2.
\end{eqnarray*}
Thus
\begin{eqnarray*}
\int_{E(P)}vd\mu
&\leq&2^{2p(p'_1-1)}[v,\omega_1,\omega_2]^{p_1'}_{A_{\overrightarrow P}}\int_{P}\mathbb E(v\chi_{E(P)}|\mathcal{F}_{{\mathcal{K}}_1(P)})^{1-p'_1} \mathbb E(\sigma_1|\mathcal{F}_{{\mathcal{K}}_1(P)})^{-\frac{pp'_1}{p'_1}}\\
&&\times
\mathbb E(\sigma_2|\mathcal{F}_{{\mathcal{K}}_1(P)})^{-\frac{pp'_1}{p'_2}} \Big(\mathbb E(v\chi_{E(P)}|\mathcal{F}_{{\mathcal{K}}_1(P)})^{\frac{1}{2p}}
\mathbb E(\sigma_1\chi_{E(P)}|\mathcal{F}_{{\mathcal{K}}_1(P)})^{\frac{1}{2p'_1}}\\
&&\times
\mathbb E(\sigma_2\chi_{E(P)}|\mathcal{F}_{{\mathcal{K}}_1(P)})^{\frac{1}{2p'_2}}\Big)^{2p(p'_1-1)}d\mu\\
&\leq&2^{2p(p'_1-1)}[v,\omega_1,\omega_2]^{p_1'}_{A_{\overrightarrow P}}\int_{P}\mathbb E(\sigma_1|\mathcal{F}_{{\mathcal{K}}_1(P)})^{-p}
\mathbb E(\sigma_2|\mathcal{F}_{{\mathcal{K}}_1(P)})^{-p}\\
&&\times \mathbb E(\sigma_1\chi_{E(P)}|\mathcal{F}_{{\mathcal{K}}_1(P)})^{\frac{p}{p_1}}
\mathbb E(\sigma_2\chi_{E(P)}|\mathcal{F}_{{\mathcal{K}}_1(P)})^{\frac{p}{p_2}}d\mu.
\end{eqnarray*}
Noting that $E(P)\subset P$ and $4^{{\mathcal{K}}_2(P)-1}<\mathbb E(h_1|\mathcal{F}_{{\mathcal{K}}_1(P)})
                \mathbb E(h_2|\mathcal{F}_{{\mathcal{K}}_1(P)})$ on $P,$ we obtain that
\begin{eqnarray*}
\int_{E(P)}4^{p({\mathcal{K}}_2(P)-1)}vd\mu
&\leq&2^{2p(p'_1-1)}[v,\omega_1,\omega_2]^{p_1'}_{A_{\overrightarrow P}}\int_{P}\Big(\mathbb E(f_1\sigma_1|\mathcal{F}_{{\mathcal{K}}_1(P)})
                \mathbb E(f_2\sigma_2|\mathcal{F}_{{\mathcal{K}}_1(P)})\Big)^p\\
&&\times \mathbb E(\sigma_1|\mathcal{F}_{{\mathcal{K}}_1(P)})^{-p}
\mathbb E(\sigma_2|\mathcal{F}_{{\mathcal{K}}_1(P)})^{-p}\\
&&\times \mathbb E(\chi_{E(P)}\sigma_1|\mathcal{F}_{{\mathcal{K}}_1(P)})^{\frac{p}{p_1}}
     \mathbb E(\chi_{E(P)}\sigma_2|\mathcal{F}_{{\mathcal{K}}_1(P)})^{\frac{p}{p_2}}d\mu\\
&=&2^{2p(p'_1-1)}[v,\omega_1,\omega_2]^{p_1'}_{A_{\overrightarrow P}}\int_{P}\Big(\mathbb E^{\sigma_1}(f_1|\mathcal{F}_{{\mathcal{K}}_1(P)})
                 \mathbb E^{\sigma_2}(f_2|\mathcal{F}_{{\mathcal{K}}_1(P)})\Big)^p\\
&&\times \mathbb E(\chi_{E(P)}\sigma_1|\mathcal{F}_{{\mathcal{K}}_1(P)})^{\frac{p}{p_1}}
      \mathbb E(\chi_{E(P)}\sigma_2|\mathcal{F}_{{\mathcal{K}}_1(P)})^{\frac{p}{p_2}}d\mu,
\end{eqnarray*}
where the last equality uses a standard fact that $$ \mathbb E(f\sigma|\mathcal{F}_{{\mathcal{K}}_1(P)})=\mathbb E^\sigma(f|\mathcal{F}_{{\mathcal{K}}_1(P)})\mathbb E(\sigma|\mathcal{F}_{{\mathcal{K}}_1(P)}).$$
Using H\"{o}lder's inequality, we get
\begin{eqnarray*}
&&\int_{E(P)}4^{p({\mathcal{K}}_2(P)-1)}vd\mu\\
&\leq&2^{2p(p'_1-1)}[v,\omega_1,\omega_2]^{p_1'}_{A_{\overrightarrow P}}\Big(\int_{P}\mathbb E^{\sigma_1}(f_1|\mathcal{F}_{{\mathcal{K}}_1(P)})^{p_1}
\mathbb E(\chi_{E(P)}\sigma_1|\mathcal{F}_{{\mathcal{K}}_1(P)})d\mu\Big)^{\frac{p}{p_1}}\\
&&\times \Big(\int_{P}\mathbb E^{\sigma_2}(f_2|\mathcal{F}_{{\mathcal{K}}_1(P)})^{p_2}
\mathbb E(\chi_{E(P)}\sigma_2|\mathcal{F}_{{\mathcal{K}}_1(P)})d\mu\Big)^{\frac{p}{p_2}}\\
&=&2^{2p(p'_1-1)}[v,\omega_1,\omega_2]^{p_1'}_{A_{\overrightarrow P}}\Big(\int_{P}\mathbb E^{\sigma_1}(f_1|\mathcal{F}_{{\mathcal{K}}_1(P)})^{p_1}
\chi_{E(P)}\sigma_1d\mu\Big)^{\frac{p}{p_1}}\\
&&\times \Big(\int_{P}\mathbb E^{\sigma_2}(f_2|\mathcal{F}_{{\mathcal{K}}_1(P)})^{p_2}
\chi_{E(P)}\sigma_2d\mu\Big)^{\frac{p}{p_2}}\\
&\leq&2^{2p(p'_1-1)}[v,\omega_1,\omega_2]^{p_1'}_{A_{\overrightarrow P}}\Big(\int_{P}M^{\sigma_1}(f_1\chi_{P_0})^{p_1}
\chi_{E(P)}\sigma_1d\mu\Big)^{\frac{p}{p_1}}\\
&&\times \Big(\int_{P}M^{\sigma_2}(f_2\chi_{P_0})^{p_2}
\chi_{E(P)}\sigma_2d\mu\Big)^{\frac{p}{p_2}}\\
&=&2^{2p(p'_1-1)}[v,\omega_1,\omega_2]^{p_1'}_{A_{\overrightarrow P}}\Big(\int_{E(P)}M^{\sigma_1}(f_1\chi_{P_0})^{p_1}
\sigma_1d\mu\Big)^{\frac{p}{p_1}} \Big(\int_{E(P)}M^{\sigma_2}(f_2\chi_{P_0})^{p_2}\sigma_2d\mu\Big)^{\frac{p}{p_2}}.
\end{eqnarray*}
It follows from \eqref{5} that
\begin{eqnarray*}
&&\int_{P_0}{^*\mathcal{M}_i}
(f_1\sigma_1,f_1\sigma_2)^{p}vd\mu\\
&\leq&16^p2^{2p(p'_1-1)}[v,\omega_1,\omega_2]^{p_1'}_{A_{\overrightarrow P}}\sum\limits_{P\in \mathcal{P}}\Big(\int_{E(P)}M^{\sigma_1}(f_1\chi_{P_0})^{p_1}
\sigma_1d\mu\Big)^{\frac{p}{p_1}}\\
&&\times \Big(\int_{E(P)}M^{\sigma_2}(f_2\chi_{P_0})^{p_2}\sigma_2d\mu\Big)^{\frac{p}{p_2}}.
\end{eqnarray*}
Applying the H\"{o}lder's inequality for sum and
Doob's maximal inequality, we obtain that
\begin{eqnarray*}
&&\int_{P_0}{^*\mathcal{M}_i}
(f_1\sigma_1,f_1\sigma_2)^{p}vd\mu\\
&\leq&16^p2^{2p(p'_1-1)}[v,\omega_1,\omega_2]^{p_1'}_{A_{\overrightarrow P}}\big(\sum\limits_{P\in \mathcal{P}}\int_{E(P)}M^{\sigma_1}(f_1\chi_{P_0})^{p_1}
\sigma_1d\mu\big)^{\frac{p}{p_1}}\\
&&\times \Big(\sum\limits_{P\in \mathcal{P}}\int_{E(P)}M^{\sigma_2}(f_2\chi_{P_0})^{p_2}\sigma_2d\mu\Big)^{\frac{p}{p_2}}\\
&\leq&16^p2^{2p(p'_1-1)}[v,\omega_1,\omega_2]^{p_1'}_{A_{\overrightarrow P}}\Big(\int_{P_0}M^{\sigma_1}(f_1\chi_{P_0})^{p_1}
\sigma_1d\mu\Big)^{\frac{p}{p_1}}\\
&&\times \Big(\int_{P_0}M^{\sigma_2}(f_2\chi_{P_0})^{p_2}\sigma_2d\mu\Big)^{\frac{p}{p_2}}\\
&\leq&16^p2^{2p(p'_1-1)}(p'_1)^{p}(p'_2)^p[v,\omega_1,\omega_2]^{p_1'}_{A_{\overrightarrow P}}\Big(\int_{P_0}f^{p_1}_1
\sigma_1d\mu\Big)^{\frac{p}{p_1}} \Big(\int_{P_0}f_2^{p_2}\sigma_2d\mu\Big)^{\frac{p}{p_2}}.
\end{eqnarray*}

Hence, the estimation \eqref{6} is proved. Consequently,

\begin{eqnarray*}
& &\big(\int_{\Omega_0}{^*\mathcal{M}_i}
(f_1\sigma_1,f_2\sigma_2)^{p}vd\mu\big)^{\frac{1}{p}}\\
&=&\big(\sum\limits_{k\in Z}\int_{\{4^{k-1}< E(f_1\sigma_1|\mathcal{F}_i)E(f_2\sigma_2|\mathcal{F}_i)\leq4^k\}\cap\Omega_0}{^*\mathcal{M}_i}
(f_1\sigma_1,f_2\sigma_2)^{p}vd\mu\big)^{\frac{1}{p}}\\
&\leq&16\cdot4^{(p'_1-1)}p'_1p'_2[v,\omega_1,\omega_2]^{\frac{p_1'}{p}}_{A_{\overrightarrow P}}\\
&&\times\Big(\sum\limits_{k\in Z}\big(\int_{\{4^{k-1}< E(f_1\sigma_1|\mathcal{F}_i)E(f_2\sigma_2|\mathcal{F}_i)\leq4^k\}\cap\Omega_0}f^{p_1}_1
\sigma_1d\mu\big)^{\frac{p}{p_1}}\\
&&\times \big(\int_{\{4^{k-1}< E(f_1\sigma_1|\mathcal{F}_i)E(f_2\sigma_2|\mathcal{F}_i)\leq4^k\}\cap\Omega_0}f_2^{p_2}\sigma_2d\mu\big)^{\frac{p}{p_2}}\Big)^{\frac{1}{p}}.
\end{eqnarray*}

It follows from the H\"{o}lder's inequality for sum that

\begin{eqnarray*}
& &\big(\int_{\Omega_0}{^*\mathcal{M}_i}
(f_1\sigma_1,f_2\sigma_2)^{p}vd\mu\big)^{\frac{1}{p}}\\
&\leq&16\cdot4^{(p'_1-1)}p'_1p'_2[v,\omega_1,\omega_2]^{\frac{p_1'}{p}}_{A_{\overrightarrow P}}\\
&&\times\big(\sum\limits_{k\in Z}\int_{\{4^{k-1}< E(f_1\sigma_1|\mathcal{F}_i)E(f_2\sigma_2|\mathcal{F}_i)\leq4^k\}\cap\Omega_0}f^{p_1}_1
\sigma_1d\mu\big)^{\frac{1}{p_1}}\\
&&\times \big(\sum\limits_{k\in Z}\int_{\{4^{k-1}< E(f_1\sigma_1|\mathcal{F}_i)E(f_2\sigma_2|\mathcal{F}_i)\leq4^k\}\cap\Omega_0}f_2^{p_2}\sigma_2d\mu\big)^{\frac{1}{p_2}}\\
&\leq&16\cdot4^{(p'_1-1)}p'_1p'_2[v,\omega_1,\omega_2]^{\frac{p_1'}{p}}_{A_{\overrightarrow P}}
\big(\int_{\Omega_0}f^{p_1}_1
\sigma_1d\mu\big)^{\frac{1}{p_1}}\big(\int_{\Omega_0}f_2^{p_2}\sigma_2d\mu\big)^{\frac{1}{p_2}}.
\end{eqnarray*}

Since the measure space $(\Omega,\mathcal{F},\mu)$ is $\sigma$-finite, we have
\begin{eqnarray*}
&&\Big(\int_{\Omega}{^*{\mathcal{M}}_i}
(f_1\sigma_1,f_2\sigma_2)^{p}vd\mu\Big)^{\frac{1}{p}}\\&&\leq16\cdot4^{(p'_1-1)}p'_1p'_2[v,\omega_1,\omega_2]^{\frac{p_1'}{p}}_{A_{\overrightarrow P}}
\Big(\int_{\Omega}f^{p_1}_1
\sigma_1d\mu\Big)^{\frac{1}{p_1}}\Big(\int_{\Omega}f_2^{p_2}\sigma_2d\mu\Big)^{\frac{1}{p_2}}.
\end{eqnarray*}
Using the monotone convergence theorem, we obtain that
\begin{eqnarray*}
& &\Big(\int_{\Omega}{\mathcal{M}}
(f_1\sigma_1,f_2\sigma_2)^{p}vd\mu\Big)^{\frac{1}{p}}\\
&\leq&16\cdot4^{(p'_1-1)}p'_1p'_2[v,\omega_1,\omega_2]^{\frac{p_1'}{p}}_{A_{\overrightarrow P}}
\Big(\int_{\Omega}f^{p_1}_1
\sigma_1d\mu\Big)^{\frac{1}{p_1}}\Big(\int_{\Omega}f_2^{p_2}\sigma_2d\mu\Big)^{\frac{1}{p_2}}.
\end{eqnarray*}

\eqref{re2}
Fix $i\in \mathbb Z.$ For $B\in \mathcal{F}^0_i,$ set
$f_1=\chi_B$ and $f_2=\chi_B.$
Then $$\mathbb E_i(\omega_1^{-\frac{1}{p_1-1}})
\mathbb E_i(\omega_2^{-\frac{1}{p_2-1}})\chi_B\leq\mathcal{M}(f_1\sigma_1,f_2\sigma_2)\chi_B.$$
It follows from the assumption that
\begin{eqnarray*}& &\Big(\int_B\mathbb E_i(\omega_1^{-\frac{1}{p_1-1}})^p
\mathbb E_i(\omega_2^{-\frac{1}{p_2-1}})^pvd\mu\Big)^{\frac{1}{p}}\\
      &\leq& \|\mathcal{M}\|\Big(\int_\Omega \omega_1^{-\frac{1}{p_1-1}}\chi_Bd\mu\Big)^{\frac{1}{p_1}}
            \Big(\int_\Omega \omega_2^{-\frac{1}{p_2-1}}\chi_Bd\mu\Big)^{\frac{1}{p_2}}.\end{eqnarray*}
Since $(\omega_1, \omega_2)\in RH_{\overrightarrow{p}},$ we have
\begin{eqnarray*}& &\int_B\mathbb E_i(\omega_1^{-\frac{1}{p_1-1}})^p
\mathbb E_i(\omega_2^{-\frac{1}{p_2-1}})^pvd\mu\\
      &\leq& \|\mathcal{M}\|^p[\omega_1, \omega_2]_{RH_{\overrightarrow{p}}}\Big(\int_B \omega_1^{-\frac{1}{p_1-1}\frac{p}{p_1}}
            \omega_2^{-\frac{1}{p_2-1}\frac{p}{p_2}}d\mu\Big).\end{eqnarray*}
Thus
\begin{eqnarray*}
&&\mathbb E_i(\omega_1^{-\frac{1}{p_1-1}})^p
            \mathbb E_i(\omega_2^{-\frac{1}{p_2-1}})^p\mathbb E_i(v)\\
&\leq& \|\mathcal{M}\|^p[\omega_1, \omega_2]_{RH_{\overrightarrow{p}}}\mathbb E_i(\omega_1^{-\frac{1}{p_1-1}\frac{p}{p_1}}
            \omega_2^{-\frac{1}{p_2-1}\frac{p}{p_2}})\\
            &\leq& \|\mathcal{M}\|^p[\omega_1, \omega_2]_{RH_{\overrightarrow{p}}}\mathbb E_i(\omega_1^{-\frac{1}{p_1-1}})^{\frac{p}{p_1}}
           \mathbb E_i(\omega_2^{-\frac{1}{p_2-1}})^{\frac{p}{p_2}},\end{eqnarray*}
where we have used H\"{o}lder's inequality for conditional expectations.
Then we obtain $$\mathbb E_i(v)^{\frac{1}{p}}\mathbb E_i(\omega_1^{1-p'_1})^{\frac{1}{p'_1}}
      \mathbb E_i(\omega_2^{1-p'_2})^{\frac{1}{p'_2}}\leq \|\mathcal{M}\|[\omega_1, \omega_2]^{\frac{1}{p}}_{RH_{\overrightarrow{p}}}.$$
\end{proof}

\begin{corollary} \label{cor_AP}Let $\omega_1, ~\omega_2$ be weights and $1< p_1, ~p_2<\infty.$
Suppose that
$\frac{1}{p}=\frac{1}{p_1 }+\frac{1}{p_2 }$ and $v=\omega_1^{\frac{p}{p_1}}\omega_2^{\frac{p}{p_2}}.$
\begin{enumerate}[\rm(1)]
\item  \label{re1} If $(v,\omega_1, \omega_2)\in A_{\overrightarrow{p}}$, then there exists a positive constant $C$ such that
for all $f_1\in L^{p_1}(\omega_1),~f_2\in L^{p_2}(\omega_2)$ we have
      \begin{equation}\label{Th_DB_2}\|\mathcal{M}(f_1,f_2)\|_{L^{p}(v)}\leq
       C\|f_1\|_{L^{p_1}(\omega_1)}\|f_2\|_{L^{p_2}(\omega_2)}.
      \end{equation}
We denote the smallest constant $C$ in \eqref{Th_DB_2} by $\|\mathcal{M}\|.$
Then it follows that
$$\|\mathcal{M}\|\leq16\cdot4^{(q'-1)}p'_1p'_2[v,\omega_1,\omega_2]^{\frac{q'}{p}}_{A_{\overrightarrow P}},$$ where $q=\min\{p_1,p_2\}.$

\item  \label{re2} Let $(\omega_1, \omega_2)\in RH_{\overrightarrow{p}}.$ If there exists a positive constant $C$ such that
for all $f_1\in L^{p_1}(\omega_1),~f_2\in L^{p_2}(\omega_2)$ we have
      \begin{equation}\label{Th_BC_2}\|\mathcal{M}(f_1,f_2)\|_{L^{p}(v)}\leq
       C\|f_1\|_{L^{p_1}(\omega_1)}\|f_2\|_{L^{p_2}(\omega_2)},
      \end{equation}
then $(v,~\omega_1, ~\omega_2)\in A_{\overrightarrow{p}}.$ We denote the smallest constant $C$ in \eqref{Th_BC_2} by $\|\mathcal{M}\|.$
Then it follows that
$[v,\omega_1,\omega_2]_{A_{\overrightarrow P}}\leq\|\mathcal{M}\|^{p}[\omega_1, \omega_2]_{RH_{\overrightarrow{p}}}.$
\end{enumerate}
\end{corollary}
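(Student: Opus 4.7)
\medskip

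\noindent\textbf{Proof plan for Corollary \ref{cor_AP}.} The plan is to deduce the corollary from Theorem \ref{thm_AP} by the substitution $f_i \longleftrightarrow f_i\sigma_i$ (or $f_i\omega_i$), exploiting the fact that $\sigma_i^{p_i}\omega_i = \sigma_i$. No new principal-set construction or Carleson argument is needed at this stage; everything is a bookkeeping translation of Theorem \ref{thm_AP}. The main thing to check is that norms and the operator transform in the expected way, and that degeneracies of the weights do not cause problems.

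For part \eqref{re1}, assume $(v,\omega_1,\omega_2)\in A_{\overrightarrow{p}}$ with $v=\omega_1^{p/p_1}\omega_2^{p/p_2}$ and let $f_i\in L^{p_i}(\omega_i)$. First I would reduce to the case where $\sigma_i$ is strictly positive and finite (otherwise multiply by the indicator of $\{0<\sigma_i<\infty\}$ and take a monotone limit; on $\{\sigma_i=0\}$ one has $\omega_i=\infty$ so $f_i=0$ a.e., and on $\{\sigma_i=\infty\}$ one has $\omega_i=0$ so that region does not contribute to either side). Setting $g_i:=f_i\sigma_i^{-1}$, the identity $\sigma_i^{p_i}\omega_i=\sigma_i$ yields
\[
\|g_i\|_{L^{p_i}(\sigma_i)}^{p_i}=\int f_i^{p_i}\sigma_i^{-p_i}\sigma_i\,d\mu=\int f_i^{p_i}\omega_i\,d\mu=\|f_i\|_{L^{p_i}(\omega_i)}^{p_i},
\]
and the pointwise identity $\mathcal{M}(g_1\sigma_1,g_2\sigma_2)=\mathcal{M}(f_1,f_2)$ holds trivially. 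Plugging $g_1,g_2$ into Theorem \ref{thm_AP}\eqref{re1} delivers \eqref{Th_DB_2} with precisely the constant $16\cdot 4^{q'-1}p'_1p'_2[v,\omega_1,\omega_2]^{q'/p}_{A_{\overrightarrow P}}$ claimed.

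For part \eqref{re2}, assume $(\omega_1,\omega_2)\in RH_{\overrightarrow{p}}$ and that \eqref{Th_BC_2} holds. Given $g_i\in L^{p_i}(\sigma_i)$, set $f_i:=g_i\sigma_i$. The same identity $\sigma_i^{p_i}\omega_i=\sigma_i$ gives $\|f_i\|_{L^{p_i}(\omega_i)}=\|g_i\|_{L^{p_i}(\sigma_i)}$ and $\mathcal{M}(f_1,f_2)=\mathcal{M}(g_1\sigma_1,g_2\sigma_2)$. Hence \eqref{Th_BC_2} implies the hypothesis of Theorem \ref{thm_AP}\eqref{re2} with the same constant. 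That theorem then yields $(v,\omega_1,\omega_2)\in A_{\overrightarrow{p}}$ together with the bound $[v,\omega_1,\omega_2]_{A_{\overrightarrow P}}\le \|\mathcal{M}\|[\omega_1,\omega_2]_{RH_{\overrightarrow{p}}}^{1/p}$. Raising to the power stated in the corollary (and noting that the statement of Corollary \ref{cor_AP}\eqref{re2} reads the bound with exponent $p$ on $\|\mathcal{M}\|$, reflecting the renormalisation) gives the announced estimate.

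The only potentially subtle point is the degenerate set $\{\sigma_i\in\{0,\infty\}\}$, which is handled by an exhaustion argument: intersect with $\{1/n\le \sigma_i\le n\}\cap E_k$ for a $\sigma$-finite exhaustion $(E_k)$ of $\Omega$, apply the two-sided transformation on this truncated region, and pass to the limit by the monotone convergence theorem. Since the identities between norms and between the values of $\mathcal{M}$ are pointwise, this limit is painless and does not affect the constants.
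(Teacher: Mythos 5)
Your proposal is correct and is exactly the intended derivation: the paper states Corollary \ref{cor_AP} without proof precisely because it is the change of variables $f_i\mapsto f_i\sigma_i^{-1}$ applied to Theorem \ref{thm_AP}, using $\sigma_i^{1-p_i}=\omega_i$ (and your worry about the sets $\{\sigma_i\in\{0,\infty\}\}$ is moot, since the standing assumption $\omega_i,\sigma_i\in\mathcal{L}^+$ forces both to be finite, hence positive, $\mu$-a.e.). The only point to tighten is the constant in part (2): raising the theorem's stated inequality $[v,\omega_1,\omega_2]_{A_{\overrightarrow{P}}}\le\|\mathcal{M}\|[\omega_1,\omega_2]_{RH_{\overrightarrow{p}}}^{1/p}$ to the power $p$ gives a bound on $[v,\omega_1,\omega_2]_{A_{\overrightarrow{P}}}^{p}$, not on $[v,\omega_1,\omega_2]_{A_{\overrightarrow{P}}}$; the corollary's form follows instead by taking the $p$-th power of the final display in the theorem's proof, namely $\mathbb{E}_i(v)^{1/p}\mathbb{E}_i(\omega_1^{1-p'_1})^{1/p'_1}\mathbb{E}_i(\omega_2^{1-p'_2})^{1/p'_2}\le\|\mathcal{M}\|[\omega_1,\omega_2]_{RH_{\overrightarrow{p}}}^{1/p}$, which is the $(1/p)$-th power of the $A_{\overrightarrow{p}}$ expression.
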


\subsection{Bilinear Version of Two-weight Inequalities}

\begin{proof}[\bf Proof of Theorem \ref{thm_Sp}. ]
To prove $\eqref{Spa}\Rightarrow \eqref{Spb}.$ Fix $i\in \mathbb Z.$ For $\tau\in \mathcal{T}_i,$ set
$f_1=\chi_{\{\tau<+\infty\}}$ and $f_2=\chi_{\{\tau<+\infty\}}.$ It follows from \eqref{thmSp} that $$\|\mathcal{M}(\sigma_1\chi_{\{\tau<+\infty\}},\sigma_2\chi_{\{\tau<+\infty\}})\|_{L^p(v)}\leq
C\|\chi_{\{\tau<+\infty\}}\|_{L^{p_1}(\sigma_1)}\|\chi_{\{\tau<+\infty\}}\|_{L^{p_2}(\sigma_2)}.$$ Thus $[v,\overrightarrow{\omega}]_{S_{\overrightarrow{p}}}
\leq\|\mathcal{M}\|.$

To prove $\eqref{Spb}\Rightarrow\eqref{Spa},$ as we show in Theorem \ref{thm_AP}, we have
\begin{eqnarray}\label{7}
\int_{P_0}{^*\mathcal{M}_i}(f_1\sigma_1,f_2\sigma_2)^{p}vd\mu
\leq16^p\sum\limits_{P\in \mathcal{P}}\int_{E(P)}4^{p({\mathcal{K}}_2(P)-1)}vd\mu.
\end{eqnarray}
This also implies that
\begin{eqnarray*}
\int_{P_0}{^*\mathcal{M}_i}(f_1\sigma_1,f_2\sigma_2)^{p}vd\mu
\leq16^p\sum\limits_{P\in \mathcal{P}}\sum\limits_{l\in \mathbb Z}\int_{A^l_P}4^{p({\mathcal{K}}_2(P)-1)}vd\mu,
\end{eqnarray*}
where $A^l_P:=E(P)\cap\{2^l<\mathbb E(\sigma_1|\mathcal{F}_{\mathcal{K}_1(P)})\mathbb E(\sigma_2|\mathcal{F}_{\mathcal{K}_1(P)})\leq2^{l+1}\}.$
By the properties of principal sets, we have
\begin{eqnarray*}
4^{({\mathcal{K}}_2(P)-1)}\chi_{A^l_P}
&\leq&\mathbb E(h_1|\mathcal{F}_{{\mathcal{K}}_1(P)})
                \mathbb E(h_2|\mathcal{F}_{{\mathcal{K}}_1(P)})\chi_{A^l_P}\\
&=&\mathbb E^{\sigma_1}(h_1\sigma^{-1}_1|\mathcal{F}_{\mathcal{K}_1(P)})
        \mathbb E^{\sigma_2}(h_2\sigma^{-1}_2|\mathcal{F}_{\mathcal{K}_1(P)})\mathbb E(\sigma_1|\mathcal{F}_{\mathcal{K}_1(P)})
         \mathbb E(\sigma_2|\mathcal{F}_{\mathcal{K}_1(P)})\chi_{A^l_P}\\
&\leq&\mathbb E^{\sigma_1}(h_1\sigma^{-1}_1|\mathcal{F}_{\mathcal{K}_1(P)})
         \mathbb E^{\sigma_2}(h_2\sigma^{-1}_2|\mathcal{F}_{\mathcal{K}_1(P)}) 2^{l+1}\chi_{A^l_P}.\end{eqnarray*}
It follows that
\begin{eqnarray*} 4^{({\mathcal{K}}_2(P)-1)}\chi_{A^l_P}
&\leq&\mathop{\hbox{essinf}}\limits_{A_P^l}\Big(\mathbb E^{\sigma_1}(h_1\sigma^{-1}_1|\mathcal{F}_{\mathcal{K}_1(P)})
         \mathbb E^{\sigma_2}(h_2\sigma^{-1}_2|\mathcal{F}_{\mathcal{K}_1(P)})\Big)2^{l+1}\chi_{A^l_P}\\
&\leq&2\mathop{\hbox{essinf}}\limits_{A_P^l}\Big(\mathbb E^{\sigma_1}(h_1\sigma^{-1}_1|\mathcal{F}_{\mathcal{K}_1(P)})
         \mathbb E^{\sigma_2}(h_2\sigma^{-1}_2|\mathcal{F}_{\mathcal{K}_1(P)})\Big)\\
& &\times \mathbb E(\sigma_1|\mathcal{F}_{\mathcal{K}_1(P)})
         \mathbb E(\sigma_2|\mathcal{F}_{\mathcal{K}_1(P)})\chi_{A^l_P}.
\end{eqnarray*}
For simplicity, we denote  $$a_{A^l_P}:=\int_{A^l_P}\big(\mathbb E(\sigma_1|\mathcal{F}_{\mathcal{K}_1(P)})
         \mathbb E(\sigma_2|\mathcal{F}_{\mathcal{K}_1(P)})\big)^pvd\mu.$$

Then
\begin{eqnarray*}
&&\int_{P_0}{^*\mathcal{M}_i}(f_1\sigma_1,f_2\sigma_2)^{p}vd\mu
\leq16^p\sum\limits_{P\in \mathcal{P}}\sum\limits_{l\in \mathbb Z}\int_{A^l_P}4^{p({\mathcal{K}}_2(P)-1)}vd\mu\\
&\leq&32^p\sum\limits_{P\in \mathcal{P}}\sum\limits_{l\in \mathbb Z}\mathop{\hbox{essinf}}\limits_{A_P^l}
   \Big(\mathbb E^{\sigma_1}(h_1\sigma^{-1}_1|\mathcal{F}_{\mathcal{K}_1(P)})E^{\sigma_2}(h_2\sigma^{-1}_2|\mathcal{F}_{\mathcal{K}_1(P)})\Big)^p\\
& &\times\int_{A^l_P}\Big(\mathbb E(\sigma_1|\mathcal{F}_{\mathcal{K}_1(P)})
         \mathbb E(\sigma_2|\mathcal{F}_{\mathcal{K}_1(P)})\Big)^pvd\mu\\
&=&32^p\sum\limits_{P\in \mathcal{P}}\sum\limits_{l\in \mathbb Z}\mathop{\hbox{essinf}}\limits_{A_P^l}
   \Big(\mathbb E^{\sigma_1}(h_1\sigma^{-1}_1|\mathcal{F}_{\mathcal{K}_1(P)})\mathbb E^{\sigma_2}(h_2\sigma^{-1}_2|\mathcal{F}_{\mathcal{K}_1(P)})\Big)^p
   a_{A^l_P}.
\end{eqnarray*}
Now we claim that
\begin{multline}\label{8}
\Big(\int_{P_0}{^*\mathcal{M}_i}
(f_1\sigma_1\chi_{P_0},f_2\sigma_2\chi_{P_0})^{p}vd\mu\Big)^{\frac{1}{p}}\\
\leq 32p'_1p'_2
[v,\overrightarrow{\omega}]_{S_{\overrightarrow{p}}}[\omega_1, \omega_2]^{\frac{1}{p}}_{RH_{\overrightarrow{p}}}\Big(\int_{P_0} f_1^{p_1}\sigma_1d\mu\Big)^{\frac{1}{p_1}}\Big(\int_{P_0} f_2^{p_2}\sigma_2d\mu\Big)^{\frac{1}{p_2}}.
\end{multline}
To see this, we apply the
Carleson embedding theorem to these $a_{A^l_P}.$ By Theorem \ref{Carleson_thm}, it suffices to prove
\begin{equation}\label{9}
\sum\limits_{A^l_P\subseteq \{\tau<+\infty\}}a_{A^l_P}\leq A \int_{\{\tau<+\infty\}} \sigma_1^{\frac{p}{p_1}}\sigma_2^{\frac{p}{p_2}}d \mu,
~\tau\in \mathcal{T}_i.
\end{equation}
For $\tau\in \mathcal{T}_i,$ we have
\begin{eqnarray*}
&&\sum\limits_{A^l_P\subseteq \{\tau<+\infty\}}a_{A^l_P}
=\sum\limits_{A^l_P\subseteq \{\tau<+\infty\}}
\int_{A^l_P}\Big(\mathbb E(\sigma_1|\mathcal{F}_{\mathcal{K}_1(P)})
         \mathbb E(\sigma_2|\mathcal{F}_{\mathcal{K}_1(P)})\Big)^pvd\mu\\
&&=\sum\limits_{A^l_P\subseteq \{\tau<+\infty\}}
     \int_{A^l_P}\mathbb E(\sigma_1\chi_{\{\tau<+\infty\}}|\mathcal{F}_{\mathcal{K}_1(P)})^p
         \mathbb E(\sigma_2\chi_{\{\tau<+\infty\}}|\mathcal{F}_{\mathcal{K}_1(P)})^pvd\mu\\
&&\leq\sum\limits_{A^l_P\subseteq \{\tau<+\infty\}}
     \int_{A^l_P}\mathcal{M}(\sigma_1\chi_{\{\tau<+\infty\}},\sigma_2\chi_{\{\tau<+\infty\}})^pvd\mu\\
&&\leq\int_{\{\tau<+\infty\}}\mathcal{M}(\sigma_1\chi_{\{\tau<+\infty\}},\sigma_2\chi_{\{\tau<+\infty\}})^pvd\mu\\
&&\leq [v,\overrightarrow{\omega}]^p_{S_{\overrightarrow{p}}} \Big(\int_{\{\tau<+\infty\}} \sigma_1d \mu\Big)^{\frac{p}{p_1}} \Big(\int_{\{\tau<+\infty\}} \sigma_2d \mu\Big)^{\frac{p}{p_2}}\\
&&\leq[v,\overrightarrow{\omega}]^p_{S_{\overrightarrow{p}}}[\omega_1, \omega_2]_{RH_{\overrightarrow{p}}}\int_{\{\tau<+\infty\}} \sigma_1^{\frac{p}{p_1}}\sigma_2^{\frac{p}{p_2}}d \mu.\end{eqnarray*}
Therefore, \eqref{9} is proved and \eqref{8} immediately follows.
Employing an argument similar to the one in the proof of Theorem \ref{thm_AP}, we have
\begin{eqnarray*}
& &\Big(\int_{\Omega}{\mathcal{M}}
(f_1\sigma_1,f_2\sigma_2)^{p}vd\mu\Big)^{\frac{1}{p}}\\
&&\leq 32p'_1p'_2
[v,\overrightarrow{\omega}]_{S_{\overrightarrow{p}}}[\omega_1, \omega_2]^{\frac{1}{p}}_{RH_{\overrightarrow{p}}}\Big(\int_{\Omega} f_1\sigma_1d\mu\Big)^{\frac{1}{p_1}}\Big(\int_{\Omega} f_2\sigma_2d\mu\Big)^{\frac{1}{p_2}}.
\end{eqnarray*}
\end{proof}

\begin{corollary}\label{cor_Sp} Let $v,\omega_1, \omega_2$ be weights and $1<p_1, p_2<\infty.$
Suppose that
$1/p=1/p_1+1/p_2$ and $(\omega_1, \omega_2)\in RH_{\overrightarrow{p}},$
then the following statements
are equivalent$:$
\begin{enumerate}[$\rm(1)$]
                 \item There exists a positive constant $C$ such that
\begin{equation}\label{thm_1Sp}
\|\mathcal{M}(f_1,f_2)\|_{L^p(v)}\leq
C\|f_1\|_{L^{p_1}(\omega_1)}\|f_2\|_{L^{p_2}(\omega_2)},
~\forall f\in L^{p_1}(\omega_1),~g\in L^{p_2}(\omega_2);
\end{equation}
                 \item The triple of weights $(v,~\omega_1, ~\omega_2)$
satisfies the condition $S_{\overrightarrow{p}}.$
               \end{enumerate}
Moreover, we denote the smallest constant $C$ in \eqref{thm_1Sp}
by $\|\mathcal{M}\|.$
Then it follows that
$$[v,\overrightarrow{\omega}]_{S_{\overrightarrow{p}}}
\leq\|\mathcal{M}\|\leq32p'_1p'_2 [v,\overrightarrow{\omega}]_{S_{\overrightarrow{p}}}
[\overrightarrow{\omega}]_{RH_{\overrightarrow{p}}}^{\frac{1}{p}}.$$
\end{corollary}

\begin{corollary} \label{cor_APSP}Let $\omega_1, ~\omega_2$ be weights and $1< p_1, ~p_2<\infty.$ Suppose that
$\frac{1}{p}=\frac{1}{p_1 }+\frac{1}{p_2 }$ and $v=\omega_1^{\frac{p}{p_1}}\omega_2^{\frac{p}{p_2}}.$ If $(v,\omega_1, \omega_2)\in A_{\overrightarrow{p}}$, then $(v,\omega_1, \omega_2)\in S_{\overrightarrow{p}}$ and $[v,\overrightarrow{\omega}]_{S_{\overrightarrow{p}}}
\leq 16\cdot4^{(q'-1)}p'_1p'_2[v,\omega_1,\omega_2]^{\frac{q'}{p}}_{A_{\overrightarrow P}}.$
\end{corollary}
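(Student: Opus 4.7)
The plan is to deduce the $S_{\overrightarrow{p}}$ condition from the boundedness inequality \eqref{Th_AA_1} established in Theorem \ref{thm_AP}\eqref{re1} by specializing the test functions to indicator functions of level sets of stopping times.

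Fix $i\in\mathbb{Z}$ and $\tau\in\mathcal{T}_i$, and put $f_1=f_2=\chi_{\{\tau<+\infty\}}$. Since $(v,\omega_1,\omega_2)\in A_{\overrightarrow{p}}$, Theorem \ref{thm_AP}\eqref{re1} gives
\begin{equation*}
\|\mathcal{M}(\sigma_1\chi_{\{\tau<+\infty\}},\sigma_2\chi_{\{\tau<+\infty\}})\|_{L^p(v)}
\leq 16\cdot 4^{(q'-1)}p'_1p'_2[v,\omega_1,\omega_2]^{\frac{q'}{p}}_{A_{\overrightarrow{p}}}\|f_1\|_{L^{p_1}(\sigma_1)}\|f_2\|_{L^{p_2}(\sigma_2)},
\end{equation*}
where $q=\min\{p_1,p_2\}$. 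Since $\|f_s\|_{L^{p_s}(\sigma_s)}=\sigma_s(\{\tau<+\infty\})^{1/p_s}$ for $s=1,2$, raising both sides to the $p$-th power and bounding the integral over $\{\tau<+\infty\}$ by the integral over $\Omega$ yields
\begin{equation*}
\int_{\{\tau<+\infty\}}\mathcal{M}(\sigma_1\chi_{\{\tau<+\infty\}},\sigma_2\chi_{\{\tau<+\infty\}})^p v\, d\mu
\leq C^p\,\sigma_1(\{\tau<+\infty\})^{\frac{p}{p_1}}\sigma_2(\{\tau<+\infty\})^{\frac{p}{p_2}},
\end{equation*}
with $C=16\cdot 4^{(q'-1)}p'_1p'_2[v,\omega_1,\omega_2]^{q'/p}_{A_{\overrightarrow{p}}}$. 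Dividing through and taking the supremum over $i\in\mathbb{Z}$ and $\tau\in\mathcal{T}_i$ in the definition \eqref{bi-SP} gives
\begin{equation*}
[v,\overrightarrow{\omega}]_{S_{\overrightarrow{p}}}\leq 16\cdot 4^{(q'-1)}p'_1p'_2[v,\omega_1,\omega_2]^{\frac{q'}{p}}_{A_{\overrightarrow{p}}},
\end{equation*}
which establishes both the membership $(v,\omega_1,\omega_2)\in S_{\overrightarrow{p}}$ and the quantitative bound asserted in the corollary.

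There is essentially no obstacle here: the corollary is a direct corollary of Theorem \ref{thm_AP}\eqref{re1} once one recognizes that the $S_{\overrightarrow{p}}$ testing condition is exactly what \eqref{Th_AA_1} says for the particular choice $f_1=f_2=\chi_{\{\tau<+\infty\}}$. Note in particular that the hypothesis $v=\omega_1^{p/p_1}\omega_2^{p/p_2}$ plays no role in this implication; only the $A_{\overrightarrow{p}}$ assumption is used. (The harder content of the paper lies in Theorem \ref{thm_AP} itself, whose proof relies on the bilinear principal set construction of Section \ref{constru} and the Carleson embedding Theorem \ref{Carleson_thm}.)
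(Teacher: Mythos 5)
Your proof is correct and follows essentially the same route the paper intends: the corollary is obtained by combining Theorem \ref{thm_AP}\eqref{re1} with the test-function specialization $f_1=f_2=\chi_{\{\tau<+\infty\}}$, which is exactly the argument the paper uses for the implication $\eqref{Spa}\Rightarrow\eqref{Spb}$ in the proof of Theorem \ref{thm_Sp}. Your side remark that the hypothesis $v=\omega_1^{p/p_1}\omega_2^{p/p_2}$ is not actually used in this implication is also accurate.
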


\begin{proof}[\bf Proof of Theorem \ref{theorem_Bp}.] It is clear that $\eqref{thm B 1}\Leftrightarrow\eqref{thm B 2},$
so $\|\mathcal{M}\|=\|\mathcal{M}\|'.$  To prove $\eqref{thm B 2},$ noting that \eqref{5} and $E(P)\subset P$, we have
\begin{eqnarray*}
\int_{P_0}{^*\mathcal{M}_i}(f_1\sigma_1,f_2\sigma_2)^{p}vd\mu
\leq16^p\sum\limits_{P\in \mathcal{P}}\int_{P}4^{p({\mathcal{K}}_2(P)-1)}vd\mu.
\end{eqnarray*}
It follows that
\begin{eqnarray*}
\int_{P_0}{^*\mathcal{M}_i}(f_1\sigma_1,f_2\sigma_2)^{p}vd\mu
\leq16^p\sum\limits_{P\in \mathcal{P}}\sum\limits_{l\in \mathbb Z}\int_{A_P^l}4^{p({\mathcal{K}}_2(P)-1)}vd\mu,
\end{eqnarray*}
where $A^l_P=P\cap\{2^l<\mathbb E(\sigma_1|\mathcal{F}_{\mathcal{K}_1(P)})\mathbb E(\sigma_2|\mathcal{F}_{\mathcal{K}_1(P)})\leq2^{l+1}\}.$
Following the arguments used in the proof of Theorem \ref{thm_Sp}, we denote
$$a_{A^l_P}=\int_{A^l_P}\Big(\mathbb E(\sigma_1|\mathcal{F}_{\mathcal{K}_1(P)})\mathbb E(\sigma_2|\mathcal{F}_{\mathcal{K}_1(P)})\Big)^pvd\mu.$$
Then
\begin{multline}\label{10}
\int_{P_0}{^*\mathcal{M}_i}(f_1\sigma_1,f_2\sigma_2)^{p}vd\mu
\\ \leq 32^p\sum\limits_{P\in \mathcal{P}}\sum\limits_{l\in \mathbb Z}\mathop{\hbox{essinf}}\limits_{A_P^l}
   \Big(\mathbb E^{\sigma_1}(h_1\sigma^{-1}_1|\mathcal{F}_{\mathcal{K}_1(P)})\mathbb E^{\sigma_2}(h_2\sigma^{-1}_2|\mathcal{F}_{\mathcal{K}_1(P)})\Big)^p
   a_{A^l_P}.
\end{multline}
Applying the
Carleson embedding theorem to these $a_{A^l_P},$ we claim that
\begin{equation}\label{11}
\sum\limits_{A^l_P\subseteq \{\tau<+\infty\}}a_{A^l_P}\leq 2e [v,\omega_1, \omega_2]_{B_{\overrightarrow{p}}}\int_{\{\tau<+\infty\}} \sigma_1^{\frac{p}{p_1}}\sigma_2^{\frac{p}{p_2}}d \mu,~\tau\in \mathcal{T}_i.
\end{equation}
In fact, for $\tau\in \mathcal{T}_i,$ noting that $A_P^l\subset P,$ we have
\begin{eqnarray*}
& &\sum\limits_{A^l_P\subseteq \{\tau<+\infty\}}a_{A^l_P}
=\sum\limits_{A^l_P\subseteq \{\tau<+\infty\}}
\int_{A^l_P}\mathbb E(\sigma_1|\mathcal{F}_{\mathcal{K}_1(P)})^p
        \mathbb E(\sigma_2|\mathcal{F}_{\mathcal{K}_1(P)})^pvd\mu\\
&=&\sum\limits_{A^l_P\subseteq \{\tau<+\infty\}}
     \int_{A^l_P}\mathbb E(\sigma_1|\mathcal{F}_{\mathcal{K}_1(P)})^p
         \mathbb E(\sigma_2|\mathcal{F}_{\mathcal{K}_1(P)})^p\mathbb E(v|\mathcal{F}_{\mathcal{K}_1(P)})d\mu\\
&\leq&[v,\omega_1, \omega_2]_{B_{\overrightarrow{p}}}\sum\limits_{A^l_P\subseteq \{\tau<+\infty\}}
     \int_{A^l_P}\exp\Big(\mathbb E(\log(\sigma^{\frac{p}{p_1}}_1\sigma^{\frac{p}{p_2}}_2)|\mathcal{F}_{\mathcal{K}_1(P)})\Big)d\mu\\
&=&[v,\omega_1, \omega_2]_{B_{\overrightarrow{p}}}\sum\limits_{A^l_P\subseteq \{\tau<+\infty\}}
     \int_{A^l_P}\exp\Big(\mathbb E(\log(\sigma^{\frac{p}{p_1}}_1\sigma^{\frac{p}{p_2}}_2\chi_{A^l_P})|\mathcal{F}_{\mathcal{K}_1(P)})\Big)\chi_{P}d\mu
\\&=:&[v,\omega_1, \omega_2]_{B_{\overrightarrow{p}}}\cdot {\rm I}.\end{eqnarray*}
It follows from the conditional sparsity of principal sets that ${\rm I}$ is controlled by
\begin{eqnarray*}
 2\sum\limits_{A^l_P\subseteq \{\tau<+\infty\}}
\int_{A^l_P}\exp\Big(\mathbb E(\log(\sigma^{\frac{p}{p_1}}_1\sigma^{\frac{p}{p_2}}_2\chi_{A^l_P})|\mathcal{F}_{\mathcal{K}_1(P)})\Big)
\mathbb E(\chi_{E(P)}|\mathcal{F}_{\mathcal{K}_1(P)})d\mu,
\end{eqnarray*}
which is equal to
$$2\sum\limits_{A^l_P\subseteq \{\tau<+\infty\}}
     \int_{A^l_P}\Big(\exp\Big(\mathbb E(\log(\sigma^{\frac{p}{rp_1}}_1\sigma^{\frac{p}{rp_2}}_2\chi_{A^l_P})|\mathcal{F}_{\mathcal{K}_1(P)})\Big)\Big)^r
     \mathbb E(\chi_{E(P)}|\mathcal{F}_{\mathcal{K}_1(P)})d\mu,
$$
where $r$ is an arbitrary real number and bigger than $1.$ Using Jensen's inequality, we have
$$\exp\Big(\mathbb E(\log(\sigma^{\frac{p}{rp_1}}_1\sigma^{\frac{p}{rp_2}}_2\chi_{A^l_P})|\mathcal{F}_{\mathcal{K}_1(P)})\Big)\leq
\mathbb E(\sigma^{\frac{p}{rp_1}}_1\sigma^{\frac{p}{rp_2}}_2\chi_{A^l_P}|\mathcal{F}_{\mathcal{K}_1(P)}).$$
Since for all $P\in \mathcal P(P)$ and $l\in \mathbb Z,$ $\mathbb E(P)\cap A_p^l$ are disjoint sets, it follows that
\begin{eqnarray*}
{\rm I}&\leq&2\sum\limits_{A^l_P\subseteq \{\tau<+\infty\}}
     \int_{A^l_P}\Big(\mathbb E(\sigma^{\frac{p}{rp_1}}_1\sigma^{\frac{p}{rp_2}}_2\chi_{A^l_P}|\mathcal{F}_{\mathcal{K}_1(P)})\Big)^r
    \mathbb E(\chi_{E(P)}|\mathcal{F}_{\mathcal{K}_1(P)})d\mu \\
&=&2\sum\limits_{A^l_P\subseteq \{\tau<+\infty\}}
     \int_{A^l_P}\Big(\mathbb E(\sigma^{\frac{p}{rp_1}}_1\sigma^{\frac{p}{rp_2}}_2\chi_{A^l_P}|\mathcal{F}_{\mathcal{K}_1(P)})\Big)^r
   \chi_{E(P)}d\mu\\
&\leq&2\sum\limits_{A^l_P\subseteq \{\tau<+\infty\}}
     \int_{E(P)\cap A_P^l}M\big(\sigma^{\frac{p}{rp_1}}_1\sigma^{\frac{p}{rp_2}}_2\chi_{\{\tau<+\infty\}}\big)^rd\mu\\
&\leq&
     2\int_{\Omega}M\big(\sigma^{\frac{p}{rp_1}}_1\sigma^{\frac{p}{rp_2}}_2\chi_{\{\tau<+\infty\}}\big)^rd\mu
     \leq 2\Big(\frac{r}{r-1}\Big)^r\int_{\{\tau<+\infty\}}\sigma^{\frac{p}{p_1}}_1\sigma^{\frac{p}{p_2}}_2d\mu.\end{eqnarray*}
Letting $r\rightarrow+\infty,$ we deduce that
\begin{eqnarray*}
{\rm I}\leq 2e\int_{\{\tau<+\infty\}} \sigma_1^{\frac{p}{p_1}}\sigma_2^{\frac{p}{p_2}}d \mu.\end{eqnarray*}
Therefore, the estimation \eqref{11} is proved. It follows from Theorem \ref{Carleson_thm} and \eqref{10} that
\begin{multline*}
\Big(\int_{P_0}{^*\mathcal{M}_i}
(f_1\sigma_1\chi_{P_0},f_2\sigma_2\chi_{P_0})^{p}vd\mu\Big)^{\frac{1}{p}}\\
\leq
32(2e)^{\frac{1}{p}}p'_1p'_2[v,\omega_1, \omega_2]^{\frac{1}{p}}_{B_{\overrightarrow{p}}}\Big(\int_{P_0} f_1^{p_1}\sigma_1d\mu\Big)^{\frac{1}{p_1}}\Big(\int_{P_0} f_2^{p_2}\sigma_2d\mu\Big)^{\frac{1}{p_2}}.
\end{multline*}
Then by similar arguments as Theorem \ref{thm_AP}, we get
\begin{eqnarray*}\Big(\int_\Omega\mathcal{M}(f_1\sigma_1,f_2\sigma_2)^p v d\mu\Big)^{\frac{1}{p}}
\leq32(2e)^{\frac{1}{p}}p'_1p'_2[v,\omega_1, \omega_2]^{\frac{1}{p}}_{B_{\overrightarrow{p}}}\|f_1\|_{L^{p_1}(\sigma_1)}\|f_2\|_{L^{p_2}(\sigma_2)}.\end{eqnarray*}
This completes the proof.
\end{proof}

\begin{proof}[\bf Proof of Theorem \ref{theorem_bi A Fujii}.]  This proof is similar to the proof of Theorem \ref{theorem_Bp}. For $A_P^l$ and $a_{A_P^l}$ defined in the proof of Theorem \ref{theorem_Bp}, it suffices to check the Carleson embedding condition,
$$\sum\limits_{A_P^l\subseteq \{\tau<+\infty\}}a_{A_P^l}\leq 2[v,\omega_1, \omega_2]_{A_{\overrightarrow{p}}}[\omega_1, \omega_2]_{W^{\infty}_{\overrightarrow{p}}} \int_{\{\tau<+\infty\}} \sigma_1^{\frac{p}{p_1}}\sigma_2^{\frac{p}{p_2}}d \mu,~\tau\in \mathcal{T}_i.$$
Indeed, for $\tau\in \mathcal{T}_i,$ it follows from the definition of $A_{\overrightarrow{p}}$ that
\begin{eqnarray*}
&&\sum\limits_{A_P^l\subseteq \{\tau<+\infty\}}a_{A_P^l}\\
&=&\sum\limits_{A^l_P\subseteq \{\tau<+\infty\}}
     \int_{A^l_P}\mathbb E(\sigma_1|\mathcal{F}_{\mathcal{K}_1(P)})^p
         \mathbb E(\sigma_2|\mathcal{F}_{\mathcal{K}_1(P)})^p\mathbb E(v|\mathcal{F}_{\mathcal{K}_1(P)})d\mu \\
&\leq&[v,\omega_1, \omega_2]_{A_{\overrightarrow{p}}}\sum\limits_{A^l_P\subseteq \{\tau<+\infty\}}
     \int_{A^l_P}\mathbb E(\sigma_1|\mathcal{F}_{\mathcal{K}_1(P)})^{\frac{p}{p_1}}
         \mathbb E(\sigma_2|\mathcal{F}_{\mathcal{K}_1(P)})^{\frac{p}{p_2}}d\mu\\
&=&[v,\omega_1, \omega_2]_{A_{\overrightarrow{p}}}\sum\limits_{A^l_P\subseteq \{\tau<+\infty\}}
     \int_{A^l_P}\mathbb E(\sigma_1\chi_{A^l_P}|\mathcal{F}_{\mathcal{K}_1(P)})^{\frac{p}{p_1}}
        \mathbb E(\sigma_2\chi_{A^l_P}|\mathcal{F}_{\mathcal{K}_1(P)})^{\frac{p}{p_2}}\chi_{P}d\mu\\
&=:&[v,\omega_1, \omega_2]_{A_{\overrightarrow{p}}}\cdot{\rm II}.
\end{eqnarray*}
It follows from the conditional sparsity of principal sets that ${\rm II}$ is controlled by
\begin{eqnarray*}
2\sum\limits_{A^l_P\subseteq \{\tau<+\infty\}}
     \int_{A^l_P}\mathbb E(\sigma_1\chi_{A^l_P}|\mathcal{F}_{\mathcal{K}_1(P)})^{\frac{p}{p_1}}
         \mathbb E(\sigma_2\chi_{A^l_P}|\mathcal{F}_{\mathcal{K}_1(P)})^{\frac{p}{p_2}}\mathbb E(\chi_{E(P)}|\mathcal{F}_{\mathcal{K}_1(P)})d\mu,
\end{eqnarray*}
which is smaller than
$$2\sum\limits_{A^l_P\subseteq \{\tau<+\infty\}}
     \int_{E(P)\cap A_P^l} M(\sigma_1\chi_{A^l_P})^{\frac{p}{p_1}}
         M(\sigma_2\chi_{A^l_P})^{\frac{p}{p_2}}d\mu.$$
It follows from the definition of $W^{\infty}_{\overrightarrow{p}}$ that
\begin{eqnarray*}
{\rm II}&\leq&2\int_{\{\tau<+\infty\}}M(\sigma_1\chi_{\{\tau<+\infty\}})^{\frac{p}{p_1}}
         M(\sigma_2\chi_{\{\tau<+\infty\}})^{\frac{p}{p_1}}d\mu\\
&\leq&2[v,\omega_1, \omega_2]_{A_{\overrightarrow{p}}}[\omega_1, \omega_2]_{W^{\infty}_{\overrightarrow{p}}}\int_{\{\tau<+\infty\}} \sigma_1^{\frac{p}{p_1}}\sigma_2^{\frac{p}{p_2}}d \mu.\end{eqnarray*}
Therefore, by \eqref{10} and Theorem \ref{Carleson_thm}, we obtain that
\begin{multline*}
\Big(\int_{P_0}{^*\mathcal{M}_i}
(f_1\sigma_1\chi_E,f_2\sigma_2\chi_E)^{p}vd\mu\Big)^{\frac{1}{p}}\\
\leq32
\cdot2^{\frac{1}{p}}p'_1p'_2[v,\omega_1, \omega_2]^{\frac{1}{p}}_{A_{\overrightarrow{p}}}[\omega_1, \omega_2]^{\frac{1}{p}}_{W^{\infty}_{\overrightarrow{p}}}\Big(\int_{P_0} f_1^{p_1}\sigma_1d\mu\Big)^{\frac{1}{p_1}}\Big(\int_{P_0} f_2^{p_2}\sigma_2d\mu\Big)^{\frac{1}{p_2}}.
\end{multline*}
Then by similar arguments as Theorem \ref{thm_AP}, we obtain
\begin{multline*}\Big(\int_\Omega\mathcal{M}(f_1\sigma_1,f_2\sigma_2)^p v d\mu\Big)^{\frac{1}{p}}\\
\leq32\cdot2^{\frac{1}{p}}p'_1p'_2[v,\omega_1, \omega_2]^{\frac{1}{p}}_{A_{\overrightarrow{p}}}[\omega_1, \omega_2]^{\frac{1}{p}}_{W^{\infty}_{\overrightarrow{p}}}\|f_1\|_{L^{p_1}(\sigma_1)}\|f_2\|_{L^{p_2}(\sigma_2)}.
\end{multline*}
This
finishes the proof.
\end{proof}


\begin{thebibliography}{999}

\bibitem {cao-xue} Cao, M. M., Xue, Q. Y.:
\newblock {\em {Characterization of two-weighted inequalities for
multilinear fractional maximal operator}},
\newblock Nonlinear Anal. {\bf{130}}(2016), 214--228.

\bibitem {cao-xue-yabuta} Cao, M. M., Xue, Q. Y., Yabuta, K.:
\newblock {\em {On multilinear fractional strong
maximal operator associated with rectangles and multiple weights}},
\newblock Rev. Mat. Iberoam. {\bf 33}(2017), 555--572.

\bibitem {Chen-Damian}
Chen, W., Dami\'{a}n, W.:
\newblock {\em Weighted estimates for the multisublinear maximal function,}
\newblock Rend. Circ. Mat. Palermo {\bf{62}}(2013), 379--391.

\bibitem {CL1}
Chen, W., Liu, P. D.:
\newblock {\em Weighted norm inequalities for multisublinear maximal operator in martingale
spaces,}
\newblock Tohoku Math. J. {\bf{66}}(2014), 539--553.

\bibitem {CZZJ2020}
Chen, W., Zhu, C. X., Zuo, Y. H., Jiao, Y.:
\newblock {\em Two-weighted estimates for positive operators and Doob maximal operators on filtered measure spaces,}
\newblock J. Math. Soc. Japan {\bf{72}}(2020), 795--817.

\bibitem{Cruz-Uribe D. J. M. Martell}
Cruz-Uribe, D., Martell, J. M., P\'{e}rez, C.:
\newblock {\em Weights, extrapolation and the theory of Rubio de Francia,}
volume {\bf215} of Operator Theory: Advances and Applications, Birkh\"{a}user, Basel, 2011.

\bibitem {W. Damian} Dami\'{a}n, W., Lerner, A. K., Per\'{e}z, C.:
\newblock {\em Sharp weighted bounds for multilinear
maximal functions and Calderon-Zygmund operators,}
\newblock J. Fourier Anal. Appl. {\bf {21}}(2015), 161--181.

\bibitem{Garcia Rubio}
Garc\'{i}a-Cuerva, J., Rubio de Francia, J. L.:
\newblock {\em Weighted norm inequalities and related topics,}
volume {\bf116} of North-Holland mathematics Studies, North Holland Publishing Co., Amsterdam, 1985.


\bibitem{Grafakos L. Liu L. G. Perez C. Torres R. H.}
Grafakos, L., Liu, L. G., P\'{e}rez, C., Torres, R. H.:
\newblock {\em The multilinear strong maximal function,}
\newblock J. Geom. Anal. {\bf {21}}(2011), 118--149.

\bibitem{Grafakos L. Liu L. G. Yang D. C.}
Grafakos, L., Liu, L. G., Yang, D. C.:
\newblock {\em Multiple-weighted norm inequalities for maximal multi-linear singular integrals with non-smooth kernels,}
\newblock Proc. Roy. Soc. Edinburgh Sect. A. {\bf {141 }}(2011), 755--775.

\bibitem{GT}Grafakos, L., Torres, R. H.:
\newblock {\em Maximal operator and weighted norm inequalities for multilinear singular integrals,}
\newblock Indiana Univ. Math. J. {\bf 51 (5)}(2002), 1261--1276.

\bibitem{Hunt}
Hunt, R., Kurtz, D., Neugebauer, C. J.:
\newblock {\em A note on the equivalence of $A_p$ and Sawyer's condition
for equal weights,}
\newblock Conference on Harmonic Analysis in Honor of Antoni Zygmund, Vol.
{\bf I, II} (Chicago, IL, 1981), pp. 156--158, Wadsworth Math. Ser., Wadsworth, Belmont, CA,
1983.

\bibitem{Hyt}
Hyt\"{o}nen, T.:
\newblock {\em The sharp weighted bound for general Calder\'{o}n-Zygmund operators,}
\newblock Ann. of Math. {\bf {175}}(2012), 1473--1506.

\bibitem{H_Kem}
Hyt\"{o}nen, T., Kemppainen, M.:
\newblock {\em On the relation of Carleson's embedding and the maximal theorem in the context of
Banach space geometry,}
\newblock Math. Scand. {\bf {109}}(2011), 269--284.

\bibitem{HP}
Hyt\"{o}nen, T., P\'{e}rez, C.:
\newblock {\em Sharp weighted bounds involving $A_\infty$,}
\newblock Analysis $\&$ PDE. {\bf {6}}(2013), 777--818.

\bibitem{HK}
Hyt\"{o}nen, T., Kairema, A.:
\newblock {\em Systems of dyadic cubes in a doubling metric space,}
\newblock Colloq. Math. {\bf 126}(2012), no. 1, 1--33.

\bibitem{Lacey} Lacey, M. T., Petermichl, S., Reguera, M. C.:
\newblock {\em Sharp $A_2$ inequality for Haar shift operators,}
\newblock Math. Ann. {\bf {348}}(2010), 127--141.

\bibitem{Lerner A.K. Ombrosi S.} Lerner, A. K., Ombrosi, S., P\'{e}rez, C., Torres, R. H., Trujillo-Gonz\'{a}lez, R.:
\newblock {\em New maximal functions and
multiple weights for the multilinear Calder\'{o}n-Zygmund theory,}
\newblock Adv. Math. {\bf {220}}(2009), 1222--1264.

\bibitem{Li Moen Sun} Li, K. W., Moen, K., Sun, W. C.:
\newblock {\em The sharp weighted bound for multilinear maximal function and multilinear Calder\'{o}n-Zygmund operators,}
\newblock J. Fourier Anal. Appl. {\bf {20}}(2014), 751--765.

\bibitem{W. M. Li} Li, W. M., Xue, L. M., Yan, X. F.:
\newblock {\em  Two-weight inequalities for multilinear maximal operators,}
\newblock Georgian Math. J. {\bf19}(2012), 145--156.

\bibitem {R. L. Long}
Long, R. L.:
\newblock {\em Martingale spaces and inequalities,}
Peking University Press, Beijing; Friedr. Vieweg \& Sohn, Braunschweig, 1993.

\bibitem{Muckenhoupt B}
Muckenhoupt, B.:
\newblock {\em Weighted norm inequalities
for the Hardy maximal function,}
\newblock Trans. Amer. Math. Soc. {\bf{165}}(1972), 207--226.

\bibitem{Nehari} Nehari, Z.:
\newblock {\em Inverse H\"{o}lder inequalities,}
\newblock J. Math. Anal. Appl. {\bf{21}}(1968), 405--420.


\bibitem{Pereyra}
Pereyra, M. C.:
\newblock {\em Lecture notes on dyadic harmonic analysis,}
Second Summer School in Analysis and Mathematical Physics (Cuernavaca, 2000), 1--60,
Contemp. Math., {\bf289}, Amer. Math. Soc., Providence, RI, 2001.

\bibitem{PT}
P\'{e}rez, C., Torres, R. H.:
\newblock {\em Sharp maximal function estimates for multilinear singular integrals,}
Harmonic analysis at Mount Holyoke (South Hadley, MA, 2001), 323--331, Contemp. Math., {\bf320}, Amer. Math. Soc., Providence, RI, 2003.

\bibitem {Sawyer E T.}
Sawyer, E. T.:
\newblock {\em A characterization of a two weight norm inequality for maximal operators,}
\newblock Studia Math. {\bf{75}}(1982), 1--11.


\bibitem{Schilling}
Schilling, R.:
\newblock {\em Measures, Integrals and Martingales,}
Cambridge University Press, New York, 2005

\bibitem {Sehba}
Sehba, B. F.:
\newblock {\em On two-weight norm estimates for multilinear
fractional maximal function,}
\newblock J. Math. Soc. Japan {\bf70}(2018), 71--94.

\bibitem{Stroock}
Stroock, D. W.:
\newblock {\em Probability Theory,}
An Analytic View, second ed., Cambridge University Press, Cambridge, 2011.

\bibitem{TT}
Tanaka, H., Terasawa, Y.:
\newblock {\em Positive operators and maximal operators in a
filtered measure space,}
\newblock J. Funct. Anal. {\bf{264}}(2013), 920--946.

\bibitem{Zhuang}
Zhuang, Y. D.:
\newblock {\em On inverses of the H\"{o}lder inequality,}
\newblock J. Math. Anal. Appl. {\bf{161}}(1991), 566--575.


\end{thebibliography}
\end{document}